\documentclass[11pt]{amsart}
\usepackage{amssymb}
\usepackage{amsmath}
\usepackage{graphicx}
\usepackage{hyperref}
\usepackage{xcolor}
\usepackage{soul}
\usepackage{tikz}

\newtheorem{thm}{Theorem}[section]
\newtheorem{lem}[thm]{Lemma}
\newtheorem{fact}[thm]{Fact}
\newtheorem{cor}[thm]{Corollary}
\newtheorem{Q}[thm]{Question}
\newtheorem{Def}[thm]{Definition}
\newtheorem{prop}[thm]{Proposition}
\newtheorem{rem}[thm]{Remark}

\newcommand{\bdfn}{\begin{Def} \rm}
\newcommand{\edfn}{\end{Def}}

\newcommand{\beqa}{\begin{eqnarray*}}
\newcommand{\eeqa}{\end{eqnarray*}}

\newcounter{cnt1}
\newcounter{cnt2}
\newcounter{cnt3}
\newcounter{cnt4}
\newcommand{\blr}{\begin{list}{$($\roman{cnt1}$)$} {\usecounter{cnt1}
 \setlength{\topsep}{0pt} \setlength{\itemsep}{0pt}}}
\newcommand{\blR}{\begin{list}{\Roman{cnt4}.\ } {\usecounter{cnt4}
 \setlength{\topsep}{0pt} \setlength{\itemsep}{0pt}}}
\newcommand{\bla}{\begin{list}{$(\alph{cnt2})$} {\usecounter{cnt2}
 \setlength{\topsep}{0pt} \setlength{\itemsep}{0pt}}}
\newcommand{\bln}{\begin{list}{$($\arabic{cnt3}$)$} {\usecounter{cnt3}
 \setlength{\topsep}{0pt} \setlength{\itemsep}{0pt}}}
\newcommand{\el}{\end{list}}

\begin{document}

\title[\tiny{An extension of Phelps theorem}]{An extension of Phelps theorem to spaces of vector-valued functions}

\author[Dwivedi]{Saurabh Dwivedi}

\address{Shiv Nadar Institution of Eminence. Gautam Buddha Nagar, Delhi NCR, Uttar Pradesh-201314, India}

\email{sd605@snu.edu.in, saurabhdewedi876@gmail.com}

\subjclass[2020]{Primary 46A22, 46B10, 46B25; Secondary 46B20, 46B22. \hfill
\textbf{}}

\keywords{spaces of vector-valued functions, \textit{w}$^*$-\textit{w} PC, \textit{w}$^*$-PC, norm-attaining functionals.}
\begin{abstract}
        In this paper, our main aim is to extend the classical theorem of Phelps (see \cite[pg.~50, Theorem~7]{HE}) on norm-attaining functionals from the space of scalar-valued continuous functions $C(\Omega)$ to its vector-valued counterpart $C(\Omega, X)$. In one of our main results, we give a complete characterization of norm-attaining functionals on $C(\Omega, X)$, assuming that $X^*$ has the Radon-Nikodym Property (RNP). For a general Banach space $X$, we investigate norm attainment for the points of weak$^*$ to weak continuity for the identity map $Id: (C(\Omega, X)_{1}^*, w^*) \rightarrow (C(\Omega, X)_{1}^*, w)$.
 
\end{abstract}
\maketitle
\section{Introduction}

Let $X$ be a real or complex Banach space, and $X^{*}$ denote the dual space of $X$. We denote by $S(X)$ and $X_1$ the unit sphere and the unit ball of $X$, respectively. Suppose $x^*\in X_{1}^*$. We say that the functional $x^*$ attains its norm on $S(X)$ if there exists a unit vector $x\in X$ such that $x^*(x)=1$. For an infinite compact Hausdorff space $\Omega$, let $C(\Omega)$ denote the space of all complex-valued continuous functions equipped with the supremum norm. For a nonzero point $x^* \in C(\Omega)^*$, we know that there exists a positive regular Borel measure $\mu$ on $\Omega$ such that $x^*(f) = \int f h \, d\mu$ for all $f \in C(\Omega)$ and $\|x^*\| = \|\mu\|$, where $h$ is a Borel measurable function on $\Omega$ with $|h| = 1$, $|\mu|$-almost everywhere. In one of the classic results of Phelps (see \cite[pg.~50, Theorem~7]{HE}), he has shown that $x^*$ attains its norm if and only if there exists $g \in C(\Omega)$ with $|g| = 1$, $|\mu|$-almost everywhere, $\|g\| \leq 1$, and $x^*(f) = \int f g \, d\mu$ for all $f \in C(\Omega)$. This result is significant in view of the Bishop-Phelps theorem, that is, for any Banach space $X$, the set of norm-attaining functionals on $X$ forms a norm-dense subset of $X^*$. For a Banach space $X$, let $C(\Omega, X)$ denote the space of $X$-valued continuous functions on $\Omega$, equipped with the supremum norm. The dual of $C(\Omega, X)$ can be identified with $M(\Omega, X^*)$, the space of $X^*$-valued regular Borel measures, equipped with the total variation norm (see \cite[Theorem~1.7.1]{CP}), when $X=\mathbb{R}$ or $\mathbb{C}$, we write $M(\Omega)$. For a detailed study of the theory of vector-valued measures, we refer the reader to \cite{DU}.

If \( \mu \in M(\Omega) \) and \( x^* \in X^* \), we define an associated functional by
\[
(\mu \otimes x^*)(f) = \int_{} x^*(f(\omega)) \, d\mu(\omega), \quad \text{for all } f \in C(\Omega, X).
\]
As a specific instance, when \( \omega \in \Omega \) and \( x^* \in X^* \), the functional defined by \( (\delta_\omega \otimes x^*)(f) = x^*(f(\omega)) \) corresponds to the evaluation at the point \( \omega \), where \( \delta_\omega \) denotes the Dirac measure at \( \omega \).

Similarly, given any \( x^{**} \in X^{**} \) and Borel subset \( A \subseteq \Omega \), we define a functional on \( M(\Omega, X^*) \) by setting
\[
(x^{**} \otimes \chi_A)(F) = x^{**}(F(A)), \quad \text{for all } F \in M(\Omega, X^*)
\]
and it is straightforward to verify that this indeed defines an element of the bidual \( C(\Omega, X)^{**} \). One of our main goals in this paper is to establish a vector-valued version of Phelps' theorem. Specifically, we show that for a Banach space $X$ with $X^*$ having the Radon-Nikodym Property (RNP). For any nonzero bounded linear functional $F$ on $C(\Omega, X)$, if $|F|$ denotes the total variation of $F$ and $h: \Omega \to X^*$ is a Borel measurable function with $\|h(\omega)\| = 1$ $|F|$-almost everywhere and $F = h d|F|$, then $F$ attains its norm on $S(C(\Omega, X))$ if and only if there exists $g \in S(C(\Omega, X))$ such that $h(\omega)(g(\omega)) = 1$ $|F|$-almost everywhere. Then we investigate for which functionals $F$ the norm is attained.

Recall from \cite{SMR} that a point $x^* \in X_{1}^*$ is called a \textit{w}$^*$-\textit{w} point of continuity (PC) if the identity map $Id: (X_{1}^{*}, \textit{w}^*) \rightarrow (X_{1}^{*}, \textit{w})$ is continuous at $x^*$, and a \textit{w}$^*$-PC if the identity map $Id: (X_{1}^{*}, \textit{w}^*) \rightarrow (X_{1}^{*}, \|\cdot\|)$ is continuous at $x^*$. We investigate norm attainment of elements in $C(\Omega, X)^*$ in the context of \textit{w}$^*$-\textit{w} PCs. Motivated by the work of Hu and Smith (see \cite{HS94}), who studied the points of continuity of the identity map $Id: (X_{1}^{*}, \textit{w}^*) \rightarrow (X_{1}^{*}, \|\cdot\|)$. The authors of \cite{SMR} analyze the structure of \textit{w}$^*$-\textit{w} points of continuity of the identity map $Id: (X_{1}^{*}, \textit{w}^*) \rightarrow (X_{1}^{*}, \textit{w})$ for non-reflexive Banach spaces. Subsequently, we use these structural theorems to investigate the norm attainment of such points of continuity. We recall that a compact Hausdorff space $\Omega$ is extremally disconnected if the closure of every open subset of $\Omega$ is again open in $\Omega$. One of the most standard examples of such a space is the Stone-\v{C}ech Compactification of a discrete set $S$, denoted by $\beta S$. More generally, if $\mu$ is a positive measure on $\Omega$, then the Stone space of the Banach algebra $L^{\infty}(\mu)$ is an extremally disconnected space (see \cite[pg.~119]{HE}). For a detailed discussion on the structure and significance of extremally disconnected spaces in functional analysis, we refer the reader to \cite[pg.~40]{HE}. Throughout, we use $I$ to denote the set of all isolated points in $\Omega$. For any subset $A\subset \Omega$, we denote by $A^c$, the set $\{\omega\in \Omega: \omega\notin A\}$ and $\overline{A}$ denotes the closure of $A$ in $\Omega$. Note that, if $\Omega$ is an extremally disconnected compact Hausdorff space, then $\overline{I}$ is a clopen subset of $\Omega$.

We denote by WC$(\Omega, X)$, the space of $X$-valued functions on $\Omega$ that are continuous when $X$ has the weak topology, equipped with the supremum norm. We also consider the vector-valued version of Phelps' theorem in this case. In \cite[pg.~212]{HS94}, authors have shown that an element $\Lambda\in C(\Omega, X)_{1}^*$ is \textit{w}$^*$-PC if and only if it has the form $\sum_{k\in I}\delta_{k}\otimes x_{k}^*$, where $I=\{k\in \Omega:k \textnormal{ is an isolated point of }\Omega\}$ and for each $k\in I$, either $x_{k}^*=0$ or $\frac{x_{k}^*}{\|x_{k}^*\|}$ is a \textit{w}$^*$-PC in $X_{1}^*$ and $\sum_{k\in I}\|x_{k}^*\|=1$. In \cite{R2}, Rao has extended the above result to classify \textit{w}$^*$-PC's of the dual unit ball WC$(\Omega,X)^*_{1}$. We provide a complete classification of \textit{w}$^*$-\textit{w} PCs in the dual unit balls $C(\Omega, X)_{1}^*$ and $\textnormal{WC}(\Omega, X)_{1}^*$, when $\Omega$ is extremally disconnected. 

We denote the canonical embedding of $X$ into its bidual $X^{**}$ by identifying $X$ with its image under the map $J_{X}:X\to X^{**}$, where $J_{X}$ is the canonical embedding. For clarity, we sometimes write $J_{X}(X)$ to explicitly refer to the embedded copy of $X$ in $X^{**}$. As shown in \cite[Lemma 2.14]{HWW}, a functional $x^*\in S(X^*)$ is \textit{w}$^*$-\textit{w} PC  if and only if it admits a unique norm preserving extension from $X$ to its bidual $X^{**}$. In this case, the canonical embedding $J_{X^*}: X^*\rightarrow X^{***}$ provides precisely this unique extension, meaning that $J_{X^*}(x^*)$ is the only norm preserving extension of $x^*$ from $X$ to $X^{**}$.

Let $\mathcal{K}(X, Y)$ and $\mathcal{F}(X, Y)$ denote the spaces of compact and weakly compact operators from $X$ to $Y$, respectively. Taking $Y=C(\Omega)$, it is well known that the spaces mentioned above can be identified with the duals $C(\Omega, X^*)$ and WC$(\Omega, X^*)$,  respectively (see \cite[page 490]{DD}). This identification serves as a key motivation for our study of \textit{w}$^*$-\textit{w} PCs in the dual unit balls $C(\Omega, X)^*_{1}$ and WC$(\Omega, X)^*_{1}$. For a discrete set $S$, we define the space $c_0({S,X})$ as the set $\{(x_{s})_{s\in S}:x_{s}\in X \text{ and }\|x_{s}\|_{s\in S}\rightarrow 0\}$, endowed with the supremum norm. Similarly, we define the spaces $\ell^{1}(S,X)=\{(x_{s})_{s\in S}:x_{s}\in X, \sum_{s\in S}\|x_{s}\|<\infty\}$, endowed with the $\ell^{1}$ and $\ell^{\infty}(S,X)=\{(x_{s})_{s\in S}:x_{s}\in X, \text{ sup}_{s\in S}\|x_{s}\|<\infty\}$, endowed with the $\ell^{\infty}$ norm. For $s \in S$ and $x \in X$, we denote by $e_s x$ the element of $\ell^1(S, X)$ which takes the value $x$ at $s$ and $0$ elsewhere. We denote by $c$ the space of all convergent sequences in the complex plane, endowed with the supremum norm.

In Section~2, we first establish how vector-valued regular Borel measures of bounded variation act on $C(\Omega, X)$, that is, we describe the action of $M(\Omega, X^*)$ on $C(\Omega, X)$, under the assumption that $X^*$ has the RNP. In the same section, we present the classification of norm-attaining functionals on $C(\Omega, X)^*$. We refer to this as a vector-valued version of Phelps' theorem (see Theorem~\ref{T34}).

In section~3, we analyze the structure of \textit{w}$^*$-\textit{w} PCs in the dual unit ball $C(\Omega, X)^*_{1}$, when $\Omega$ is extremally disconnected. Finally, we use the structure of \textit{w}$^*$-\textit{w} PCs in the dual unit ball $C(\Omega, X)_{1}^*$ to derive explicit conditions for norm attainment. In Theorem~\ref{T0}, we provide a complete characterization of \textit{w}$^*$-\textit{w} PCs of the dual unit ball $C(\Omega, X)_{1}^*$ that attain their norm on $S(C(\Omega, X))$, under some condition on $\Omega$.

In section~4, we analyze the structure of \textit{w}$^*$-\textit{w} PCs in the dual unit ball WC$(\Omega,X)^*_{1}$, when $\Omega$ is extremally disconnected. Also, we provide the results similar to the one established in section~3 for norm attainment in the space WC$(\Omega, X)^*_{1}$.

\section{Phelps' Theorem for the space $C(\Omega, X)$}
A linear functional $x^*\in X^*$ is said to be a support functional of $X_{1}$ if $x^*\neq 0$, and it attains its norm on $X_{1}$. We begin this section by recalling two well-known theorems from \cite[pg.~20]{Ho} and \cite[pg.~50]{HE} in our setup.
\begin{thm}[Bishop-Phelps]\label{T31}
    Let $A$ be a closed convex set in the real Banach space and let $\phi\in S(X^*)$ be bounded above on $A$. Then for any $\delta\in (0,1)$ there exists a support functional $\varphi$ of $A$ with $\|\phi-\varphi\|<\delta$.
\end{thm}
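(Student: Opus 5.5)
The plan is to follow the classical Bishop--Phelps argument through a Brøndsted--Ekeland type cone construction combined with a completeness argument. Interpreting ``support functional of $A$'' as a nonzero functional attaining its supremum over $A$, we may first normalise: $\phi\in S(X^*)$ with $s:=\sup_A\phi<\infty$, and $\delta\in(0,1)$ is fixed. Choose $k>1$ with $1/k<\delta/2$ (say $k=2/\delta+1$) and form the Bishop--Phelps cone
\[
K=\{x\in X:\ \|x\|\le k\,\phi(x)\}.
\]
This is a closed convex cone, and it has nonempty interior because $\phi$ has norm one. Order $X$ by $x\preceq y$ iff $y-x\in K$.

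\textbf{Step 1 (maximal element).} Fix $x_0\in A$ and let $A_0=\{a\in A: a\succeq x_0\}$. Along a $\preceq$-increasing chain in $A_0$, $\phi$ increases and is bounded by $s$. Since $\|y-x\|\le k(\phi(y)-\phi(x))$ whenever $x\preceq y$, the chain is Cauchy as a net. Completeness and closedness of $A$ and $K$ then give an upper bound in $A_0$. Alternatively, one can build a sequence $x_{n+1}\in A\cap(x_n+K)$ with $\phi(x_{n+1})\ge \sup_{A\cap(x_n+K)}\phi-2^{-n}$, which avoids Zorn's lemma. Either route yields a point $x_\infty\in A$ with $(x_\infty+K)\cap A=\{x_\infty\}$.

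\textbf{Step 2 (separation).} The set $x_\infty+\operatorname{int}K$ is convex, open, and disjoint from $A$. By Hahn--Banach there is $\varphi\ne0$ with
\[
\sup_A\varphi\le\varphi(x_\infty)\le\inf_{x_\infty+K}\varphi.
\]
Hence $\varphi$ attains its supremum on $A$ at $x_\infty$, so it is a support functional, and $\varphi\ge0$ on $K$.

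\textbf{Step 3 (estimate).} It remains to show that $\varphi$ can be scaled so that $\|\phi-\varphi\|<\delta$. Normalise $\|\varphi\|=1$. The key lemma is that if $\|\varphi\|=1$ and $\varphi\ge0$ on $K=\{\|x\|\le k\phi(x)\}$, then either $\|\phi-\varphi\|\le 2/k$ or $\|\phi+\varphi\|\le 2/k$; the sign is then fixed by positivity on $K$.

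To prove the lemma, suppose $\varphi$ is nonnegative on $K$. For $x$ with $\|x\|\le1$ and $\phi(x)=0$, we claim $|\varphi(x)|\le 1/k$. Pick $y$ with $\|y\|\le1$ and $\phi(y)$ close to $1$. Then $\pm x+k^{-1}\ldots$ can be arranged so that $ty\pm x\in K$ for suitable $t\approx 1/k$ (the constant needs care), which forces $|\varphi(x)|\lesssim t|\varphi(y)|\le 1/k$. So $\varphi$ restricted to $\ker\phi$ has norm at most $1/k$. Extend $\varphi|_{\ker\phi}$ by Hahn--Banach to $\psi$ with $\|\psi\|\le 1/k$. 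Then $\varphi-\psi$ vanishes on $\ker\phi$, so $\varphi-\psi=\lambda\phi$. This gives $|\,|\lambda|-1|\le 1/k$, and positivity of $\varphi$ on $K$ (evaluated at $y$ with $\phi(y)\approx1$) forces $\lambda>0$. Therefore $\|\varphi-\phi\|\le\|\psi\|+|\lambda-1|\le 2/k<\delta$.

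The main obstacle is this Step 3 lemma: getting the exact constant, including the choice of $t$ and the near-norming vector $y$, and handling the possibility that $\phi$ does not attain its norm. Step 1 is routine completeness bookkeeping.
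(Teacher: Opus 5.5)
The paper gives no proof of this statement. It is quoted as the classical Bishop--Phelps theorem from Holmes' monograph and used only as background, so there is no argument of the paper's to compare with. Your outline is the standard proof of that theorem:
\begin{itemize}
\item the cone $K$;
\item a $\preceq$-maximal point of $A$, obtained by Zorn's lemma or by the Br{\o}ndsted--Ekeland nested-set construction;
\item separation of $x_\infty+\operatorname{int}K$ from $A$;
\item a perturbation lemma.
\end{itemize}
Steps 1 and 2 are sound as written.

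The one thing to repair is the constant in Step 3, which you left open and guessed incorrectly. Fix $x\in\ker\phi$ with $\|x\|\le 1$. Take $y\in S(X)$ with $k\phi(y)>1$ and set $t=1/(k\phi(y)-1)$. Then $\|ty\pm x\|\le t+1=kt\phi(y)=k\phi(ty\pm x)$, so $ty\pm x\in K$ and $|\varphi(x)|\le t\varphi(y)\le t$. Letting $\phi(y)\to 1$ gives $|\varphi(x)|\le 1/(k-1)$.

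The bound $1/k$ you claimed is genuinely false. Take $\mathbb{R}^2$ with the $\ell^1$-norm and $\phi=(1,0)$. Then $K=\{x:\ x_1\ge 0,\ |x_2|\le (k-1)x_1\}$. For $k\ge 2$ the functional $\varphi=(1,1/(k-1))$ has norm one and is nonnegative on $K$. Yet it takes the value $1/(k-1)$ at the unit vector $(0,1)\in\ker\phi$.

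Running your decomposition $\varphi=\psi+\lambda\phi$ with the correct bound gives $\|\psi\|\le 1/(k-1)$ and $\bigl|\,|\lambda|-1\bigr|\le 1/(k-1)$. Hence $\|\varphi-\phi\|\le 2/(k-1)$ when $\lambda>0$. With your choice $k=2/\delta+1$ this yields only $\|\varphi-\phi\|\le\delta$, not the strict inequality required. Choose instead any $k$ with $2/(k-1)<\delta$, for example $k=4/\delta+1$.

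Since $\delta<1$, this choice also gives $2/(k-1)<1$, which is what the sign step actually needs. If $\lambda<0$, then $\|\varphi+\phi\|\le 2/(k-1)$. Any $y\in S(X)$ with $\phi(y)>2/(k-1)$ lies in $K$, because $2/(k-1)>1/k$. Such a $y$ satisfies $\varphi(y)\le 2/(k-1)-\phi(y)<0$, contradicting $\varphi\ge 0$ on $K$. Hence $\lambda>0$, and $\|\varphi-\phi\|\le 2/(k-1)<\delta$. With this adjustment your proof is complete.
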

\begin{thm}[Phelps]\label{T32}
    Let $\Omega$ be a compact Hausdorff space, $x^*$ a non zero bounded linear functional on $C(\Omega)$, $\mu$ a positive regular Borel measure on $\Omega$ and $h$ a Borel measurable function on $\Omega$ with $|h|=1$ and $x^*(f)=\int fhd\mu$ for all $f\in C(\Omega)$ and $\|x^*\|=\|\mu\|$. Then $x^*$ attains its norm if and only if there is a $g\in C(\Omega)$ with $|g|=1$, $|\mu|$-almost everywhere,  $\|g\|\leq 1$ and $x^*(f)=\int fgd\mu$ for all $f\in C(\Omega)$.
\end{thm}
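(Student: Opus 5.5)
The plan is to prove the two implications separately, working with the polar decomposition $x^*(f)=\int fh\,d\mu$, where $|h|=1$ $\mu$-a.e. and $\|x^*\|=\|\mu\|=\mu(\Omega)$.

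\textbf{Sufficiency.} Assume $g\in C(\Omega)$ satisfies $\|g\|\le 1$, $|g|=1$ $\mu$-a.e., and $x^*(f)=\int fg\,d\mu$ for all $f$. Take $f=\bar g$, which is continuous. Then $\|f\|\le 1$ and
\[
x^*(\bar g)=\int |g|^2\,d\mu=\mu(\Omega)=\|x^*\|.
\]
So $x^*$ attains its norm at $\bar g$ after normalizing. This direction is immediate.

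\textbf{Necessity.} Suppose $f_0\in C(\Omega)$ with $\|f_0\|=1$ and $x^*(f_0)=\|x^*\|$. Then
\[
\int f_0h\,d\mu=\int 1\,d\mu .
\]
Since $|f_0h|\le 1$ $\mu$-a.e., taking real parts gives
\[
\int \bigl(1-\operatorname{Re}(f_0h)\bigr)\,d\mu=0
\]
with a nonnegative integrand. Hence $\operatorname{Re}(f_0h)=1$ $\mu$-a.e. Combined with $|f_0h|\le1$, this forces $f_0h=1$ $\mu$-a.e.

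Because $|h|=1$, we get $|f_0|=1$ $\mu$-a.e. and $h=\bar f_0$ $\mu$-a.e. Set $g=\bar f_0\in C(\Omega)$. Then $\|g\|\le 1$, $|g|=1$ $\mu$-a.e., and $\int fg\,d\mu=\int fh\,d\mu=x^*(f)$ for every $f$, because $g=h$ $\mu$-a.e.

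\textbf{Main point.} The only step needing care is the passage from equality of integrals to pointwise a.e. equality. This is the standard argument that a nonnegative function with zero integral vanishes a.e., together with the fact that $\operatorname{Re} z=1$ and $|z|\le1$ force $z=1$.

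In the real-scalar case the same argument works with $\operatorname{Re}$ omitted. The phrase "$|\mu|$-almost everywhere" in the statement coincides with "$\mu$-a.e." because $\mu$ is positive.
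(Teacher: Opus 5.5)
Your proof is correct and complete in both directions. The key step is sound: $1-\mathrm{Re}(f_0h)\ge 0$ has zero integral, and $\mathrm{Re}\,z=1$ together with $|z|\le 1$ forces $z=1$.

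The paper gives no proof of this theorem (it is quoted from Phelps), but your argument is the one the paper uses for its vector-valued analogue, Theorem~\ref{T34}. The only difference is that you go directly through $\mathrm{Re}(f_0h)$, whereas the paper first derives $|h(\omega)(g(\omega))|=1$ a.e.\ and then the real-part identity.
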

In Theorem~\ref{T32}, Phelps characterizes the norm-attaining functionals on the space $C(\Omega)$. However, extending this characterization to the vector-valued setting $C(\Omega, X)$ is significantly more challenging, as the dual space $C(\Omega, X)^*$ is more complex than its scalar-valued version  $C(\Omega)$. As discussed in the introduction, the dual $C(\Omega, X)^*$ can be identified with the space $M(\Omega, X^*)$ of regular Borel $X^*$-valued measures. Let $F\in C(\Omega, X)^*$ be such a measure and let $x\in X$. We can define a scalar-valued measure $x\otimes F:\Omega\to \mathbb{C}$ by $x\otimes F(A)=F(A)(x)$ for all Borel sets $A\subseteq \Omega$. It is easy to see that $x\otimes F\in M(\Omega)$. We denote by $|F|$ the total variation of $F$. Similarly, given a $f\in C(\Omega)$ and $x\in X$, we can define $x\otimes f:\Omega\to X$ given by $\omega\to f(\omega)x$. Clearly, $x\otimes f\in C(\Omega, X)$. In the following lemma, we make use of such objects to understand how elements of $C(\Omega, X)^*$ act on $C(\Omega, X)$, when $X^*$ has the RNP. 
\begin{lem}\label{L33}
    Let $X$ be such that $X^*$ has the RNP and let $F\in C(\Omega, X)^*$. If $h:\Omega\to X^*$ is the Radon-Nykodym derivative of $F$ (i.e, $F=hd|F|$), then $F(f)=\int h(\omega)(f(\omega))d|F|$ for all $f\in C(\Omega, X)$.
\end{lem}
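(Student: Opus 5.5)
We identify $F\in C(\Omega,X)^*$ with a regular Borel measure $F\in M(\Omega,X^*)$ of bounded variation, via the duality of \cite[Theorem~1.7.1]{CP}. The plan is to prove the formula first for simple functions, then for elementary tensors $x\otimes f$, and finally for all of $C(\Omega,X)$ by density and continuity. The duality acts as follows. For a simple function $s=\sum_{i=1}^n x_i\chi_{A_i}$, with the $A_i$ disjoint Borel sets, we have $F(s)=\sum_i F(A_i)(x_i)$. This is extended to bounded Borel measurable, totally measurable functions by uniform limits, using $|F(s)|\le \|s\|_\infty |F|(\Omega)$.

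\textbf{Step 1 (simple functions).} Since $X^*$ has the RNP and $F\ll |F|$ with $|F|$ finite, there is a Bochner integrable $h:\Omega\to X^*$ with $F(A)=\int_A h\,d|F|$ for every Borel set $A$. Moreover $\|h(\omega)\|=1$ for $|F|$-a.e.\ $\omega$, because $|F|(A)=\int_A\|h\|\,d|F|$. Bochner integrals commute with the bounded evaluation map $x^*\mapsto x^*(x)$, so
\[
F(A)(x)=\int_A h(\omega)(x)\,d|F|(\omega).
\]
Summing over the pieces of $s$ gives
\[
F(s)=\sum_i\int_{A_i}h(\omega)(x_i)\,d|F|=\int h(\omega)(s(\omega))\,d|F|.
\]

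\textbf{Step 2 (approximation).} Take $f\in C(\Omega,X)$. Its range $f(\Omega)$ is norm compact, so for each $\varepsilon>0$ we cover it by finitely many balls of radius $\varepsilon$ centred at points $x_i$. Setting $A_i=f^{-1}(B(x_i,\varepsilon))\setminus\bigcup_{j<i}A_j$ produces Borel sets, and $s_\varepsilon=\sum_i x_i\chi_{A_i}$ satisfies $\|f-s_\varepsilon\|_\infty\le\varepsilon$.

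\textbf{Step 3 (passing to the limit).} Both sides of the formula are continuous under uniform convergence. For the left side this follows from the bound $|F(g)|\le\|g\|_\infty|F|(\Omega)$. For the right side,
\[
\Big|\int h(\omega)(f(\omega)-s_\varepsilon(\omega))\,d|F|\Big|\le \varepsilon\int\|h\|\,d|F|=\varepsilon|F|(\Omega).
\]
The integrand $\omega\mapsto h(\omega)(f(\omega))$ is measurable because it is the pointwise limit of the measurable functions $h(\cdot)(s_\varepsilon(\cdot))$. It is integrable because it is bounded by $\|f\|_\infty\|h(\omega)\|$. Letting $\varepsilon\to0$ gives $F(f)=\int h(\omega)(f(\omega))\,d|F|$.

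The main obstacle is Step 3's left-hand side. The claim that the functional on $C(\Omega,X)$ and the measure $F$ act compatibly on simple functions is exactly the content of the Riesz-type identification. When $F(s_\varepsilon)$ is defined as $\sum_i F(A_i)(x_i)$, we must know that this extension of $F$ to bounded totally measurable functions is norm continuous and agrees with $F$ on $C(\Omega,X)$. I would cite the construction in \cite{CP}, where the pairing is defined precisely as this limit of Riemann-type sums. If that is not directly available, I would prove the agreement on elementary tensors $x\otimes f$, where it reduces to the scalar Riesz theorem for the measure $x\otimes F$. Elementary tensors span a dense subspace of $C(\Omega,X)$, since partitions of unity subordinate to a finite cover of the range give $\varepsilon$-approximations by finite sums $\sum_i x_i\otimes\phi_i$. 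The case of general $f$ then follows by continuity.
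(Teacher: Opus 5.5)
Your proof is correct, but its main line takes a different route from the paper's.

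The paper stays inside $C(\Omega,X)$ throughout. It checks the formula on elementary tensors $x\otimes\phi$ with $\phi\in C(\Omega)$, using the scalar identity $F(x\otimes\phi)=\int\phi\,d(x\otimes F)$ (taken from \cite{WH}) together with $d(x\otimes F)=h(\omega)(x)\,d|F|$. It then passes to all of $C(\Omega,X)$ by density of the span of such tensors (from \cite{RR}).

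You instead test on Borel simple functions, which are not in $C(\Omega,X)$, and approximate $f$ uniformly via compactness of $f(\Omega)$. This needs the representation theorem in its integral form, i.e.\ that the pairing of \cite{CP} is the limit of the sums $\sum_i F(A_i)(x_i)$. That is how the Dinculeanu--Singer identification is built, so your primary citation suffices. Your fallback is exactly the paper's proof.

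Both routes rest on the same computation $F(E)(x)=\int_E h(\omega)(x)\,d|F|$, obtained by commuting the Bochner integral with evaluation at $x$.

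What your route gains is an elementary approximation step and an explicit check that $\omega\mapsto h(\omega)(f(\omega))$ is measurable and that $g\mapsto\int h(\omega)(g(\omega))\,d|F|$ is bounded by $\|g\|_\infty|F|(\Omega)$. The paper's final density step needs this continuity of the right-hand side but only mentions the continuity of $F$. What the paper's route gains is that it never has to extend $F$ beyond continuous functions, or justify that such an extension agrees with $F$.
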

\begin{proof}
    Let $S=\{x\otimes f: x\in X\text{ and }f\in C(\Omega)\}\subseteq C(\Omega, X)$. Then we have $\overline{span}(S)=C(\Omega, X)$ (see \cite[pg. 49-50]{RR}). For $x\otimes f \in S$, we have $F(x\otimes f)=\int x\otimes fdF=\int f(\omega)d(x\otimes F)$ (see \cite{WH}). By the hypothesis, $h$ is Bochner integrable; in particular, it is Pettis integrable and $\|h(\omega)\|=1$, $|F|$-almost everywhere. From \cite[proposition~3.8]{RR}, we have $F(E)(x)=(\int_{E} h(\omega)d|F|)(x)=\int_{E} h(\omega)(x)d|F|$ for each measurable subset $E\subseteq \Omega$. So we get $d(x\otimes F)=h(\omega)(x)d|F|$. Thus,
    $$
        F(x\otimes f)=\int f(\omega)d(x\otimes F)
        =\int f(\omega)h(\omega)(x)d|F|
        =\int h(\omega)((x\otimes f)(\omega))d|F|.
    $$ 
    Since $span(S)$ is dense in the space $C(\Omega, X)$ and $F$ is continuous, we have $F(f)=\int h(\omega)(f(\omega))d|F|$ for all $f\in C(\Omega, X)$. This completes the proof.
\end{proof}

Now we present the main theorem of this section, which we refer to as a vector-valued analogue of Phelps's Theorem. In this theorem, we use Lemma~\ref{L33} to characterize norm-attaining functionals on $C(\Omega, X)$, when $X^*$ has the RNP.
\begin{thm}\label{T34}
    Let $X$ be a Banach space such that $X^*$ has the RNP, $F$ a non zero bounded linear functional on $C(\Omega, X)$, $|F|$ be the total variation of $F$ and $h:\Omega\to X^*$ a Borel measurable function with $\|h(\omega)\|=1$, $|F|$-almost everywhere and $F=hd|F|$. Then $F$ attains its norm on $S(C(\Omega, X))$ if and only if there exists a $g\in S(C(\Omega, X))$ such that $h(\omega)(g(\omega))=1$, $|F|$-almost everywhere.
\end{thm}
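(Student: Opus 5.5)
Both directions reduce to the integral representation of Lemma~\ref{L33}, $F(f)=\int h(\omega)(f(\omega))\,d|F|$, together with $\|F\|=\||F|\|=|F|(\Omega)$; the latter is the identification of $C(\Omega,X)^*$ with $M(\Omega,X^*)$ under the total variation norm. After normalizing I may assume $\|F\|=1$, so $|F|$ is a probability measure. If norm attainment is read in the complex sense, $|F(g)|=\|F\|$, I first multiply $g$ by a unimodular scalar so that $F(g)=\|F\|$; this changes neither the hypothesis nor the conclusion.

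\emph{Sufficiency.} Let $g\in S(C(\Omega,X))$ satisfy $h(\omega)(g(\omega))=1$ for $|F|$-almost every $\omega$. Lemma~\ref{L33} gives
$$F(g)=\int h(\omega)(g(\omega))\,d|F|=|F|(\Omega)=\|F\|.$$
Hence $F$ attains its norm at $g$.

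\emph{Necessity.} Suppose $g\in S(C(\Omega,X))$ satisfies $F(g)=\|F\|=|F|(\Omega)$. Set $\phi(\omega)=h(\omega)(g(\omega))$. This function is Borel measurable: since $\|h(\omega)\|\le 1$ almost everywhere and $g$ is continuous, $\phi$ is a pointwise limit of functions of the form $h(\cdot)(g(\omega_i))\chi_{A_i}$, obtained by approximating $g$ uniformly with simple functions built from a finite partition. (Alternatively, I would take $h$ to be strongly measurable, which the RNP derivative is, and then $\phi$ is measurable automatically.) For $|F|$-almost every $\omega$,
$$|\phi(\omega)|\le \|h(\omega)\|\,\|g(\omega)\|\le 1.$$
Lemma~\ref{L33} then yields $\int \phi\,d|F|=|F|(\Omega)$. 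Taking real parts gives $\int (1-\operatorname{Re}\phi)\,d|F|=0$ with $1-\operatorname{Re}\phi\ge 0$ almost everywhere, so $\operatorname{Re}\phi=1$ almost everywhere. Combined with $|\phi|\le 1$, this forces $\phi=1$ $|F|$-almost everywhere, which is exactly the required condition, with the same $g$.

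I expect the only delicate point to be justifying the two facts the argument leans on. The first is $\|F\|=|F|(\Omega)$ under the Riesz-type identification (cited from \cite{CP}). The second is that the density $h$ has norm exactly $1$ almost everywhere rather than merely $\le 1$. That second fact is assumed in the statement, and in any case only $\|h\|\le 1$ is needed for the estimate. The measurability of $\phi$ is routine given the strong measurability of $h$.
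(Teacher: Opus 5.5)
Your proposal is correct and follows essentially the paper's route. Both directions rest on the integral representation of Lemma~\ref{L33} together with $\|F\|=|F|(\Omega)$, and necessity comes from $\int(1-\operatorname{Re}\phi)\,d|F|=0$ combined with $|\phi|\le 1$. Your extras are harmless refinements of the paper's argument: you go straight to the real part instead of first proving $|\phi|=1$, and you add the unimodular rotation and the measurability check for $\phi$.
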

\begin{proof}
    Let $F$ attains its norm at $g\in S(C(\Omega, X))$. Since $\|h(\omega)\|=1$, $|F|$-almost everywhere and $\|g\|=1$, we get that $|h(\omega)(g(\omega))|\leq 1$, $|F|$-almost everywhere. From Lemma~\ref{L33}, we have

    \begin{align*}
        \|F\|=F(g)&=\int h(\omega)(g(\omega))d|F|\\
        &\leq\int |h(\omega)(g(\omega))|d|F|\\
        &=\int 1d|F|\\
        &=\|F\|\\
    \end{align*}

    This implies $\int (1-|h(\omega)(g(\omega)))|d|F|=0$. It gives that $|h(\omega)(g(\omega))|=1$, $|F|$-almost everywhere. By using a similar argument, we get that $Re(h(\omega)(g(\omega)))=1$. Combining last two equalities, we get that $h(\omega)(g(\omega))=1$, $|F|$-almost everywhere. Conversely, if there exists a function $g\in S((\Omega,X))$ such that $h(\omega)(g(\omega))=1$, $|F|$-almost everywhere, then it is easy to see that $F$ attains its norm at $g$. This completes the proof.
\end{proof}

\section{Phelps' Theorem for norm-attaining functionals which are \textit{w}$^*$-\textit{w} PC}
The main goal of this is to find conditions under which one can explicitly find a continuous function $f$, where a norm-attaining functional $F$ attains its norm at $f$ without assuming the global condition, such as RNP. We begin by recalling the following set of results from \cite{HWW} and \cite{R3}, which will be utilised in subsequent arguments. For any subset $E\subseteq X^*$, we denote by ext$(E)$ the set of all extreme points of $E$ and by co$(E)$ the convex hull of $E$.  Let $Y\subseteq X$ be a closed subspace. We say that $Y$ is an M-ideal if there exists a norm one projection $P:X^*\to X^*$ such that ker($P$)=$Y^{\perp}$ and $X^*=Y^{\perp}\oplus_{1}Y^*$.
\begin{lem}\cite[Theorem~6.2]{SMR}\label{L21}
    For an M-ideal $Y\subseteq X$. If $x^*\in S(Y^*)$ is a \textit{w}$^*$-\textit{w} PC, then $x^*\in S(X^*)$ is a \textit{w}$^*$-\textit{w} PC.
\end{lem}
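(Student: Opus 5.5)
Throughout I write $P$ for the norm one projection on $X^*$ with $\ker P=Y^\perp$. I identify $Y^*$ with the range $PX^*$ through the Hahn--Banach extension: each $y^*\in Y^*$ has a unique norm preserving extension $\tilde y^*\in PX^*$, and $X^*=Y^\perp\oplus_1 PX^*$. The plan is to use the characterization recalled in the introduction from \cite[Lemma 2.14]{HWW}: a norm one functional is a \textit{w}$^*$-\textit{w} PC exactly when it has a unique norm preserving extension to the bidual. So I must show that $\tilde x^*$, viewed in $X^*$, has a unique norm preserving extension to $X^{**}$, given that $x^*$ has a unique one to $Y^{**}$.

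The first step is the bidual structure. Since $Y$ is an M-ideal, $Y^{\perp\perp}$ is an M-summand in $X^{**}$: $X^{**}=Y^{\perp\perp}\oplus_\infty (Y^\perp)^\perp$, where $Y^{\perp\perp}\cong Y^{**}$. The projection is $P^*$. Dually, $X^{***}=Y^{\perp\perp\perp}\oplus_1 (Y^{\perp\perp})^{\perp}$. Restriction to $Y^{\perp\perp}$ then identifies the first summand with $Y^{***}$ isometrically.

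The second step takes a norm preserving extension $\Phi\in X^{***}$ of $\tilde x^*$, with $\|\Phi\|=1$. Decompose $\Phi=\Phi_1+\Phi_2$ according to the $\ell_1$ splitting, so $\|\Phi_1\|+\|\Phi_2\|=1$.
\begin{itemize}
\item $\Phi_1|_{Y^{\perp\perp}}=\Phi|_{Y^{\perp\perp}}$ is a functional on $Y^{**}$. It extends $x^*$, since on $Y\subseteq Y^{\perp\perp}$ it agrees with $\tilde x^*$, which restricts to $x^*$. Hence $\|\Phi_1\|\ge \|x^*\|=1$, which forces $\Phi_2=0$ and $\|\Phi_1\|=1$.
\item So $\Phi$ vanishes on $(Y^\perp)^\perp$, and its restriction to $Y^{**}$ is a norm one extension of $x^*$.
\item By the hypothesis and \cite[Lemma 2.14]{HWW}, this restriction equals $J_{Y^*}(x^*)$, that is, $y^{**}\mapsto y^{**}(x^*)$.
\end{itemize}
Thus $\Phi$ is uniquely determined on both summands of $X^{**}$. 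The same computation applies to $J_{X^*}(\tilde x^*)$, which is itself such an extension. Hence $\Phi=J_{X^*}(\tilde x^*)$, and uniqueness gives the \textit{w}$^*$-\textit{w} PC property.

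The main obstacle is the first step: justifying that $(Y^\perp)^\perp$ and $Y^{\perp\perp}$ give an $\ell_\infty$ splitting of $X^{**}$, and that the splitting is compatible with the identification of $Y^*$ inside $X^*$. I would derive it directly from $X^*=Y^\perp\oplus_1 PX^*$ by taking adjoints: $P^*$ is an $M$-projection whose range is $(\ker P)^\perp=Y^{\perp\perp}$ and whose kernel is $(PX^*)^\perp$. This is the step where the M-ideal hypothesis enters.

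For the last point I also need that the restriction of $\tilde x^*$ to $Y^{\perp\perp}$, under the identification $Y^{\perp\perp}\cong Y^{**}$, acts as $y^{**}\mapsto y^{**}(x^*)$, and that $\tilde x^*$ vanishes on $(PX^*)^\perp$. Both follow by unwinding the definitions, since $\tilde x^*\in PX^*$. An alternative, if the bidual route gets messy, is to argue directly with nets: if $x_\alpha^*\to\tilde x^*$ weak$^*$ in $X_1^*$, then the restrictions $x^*_\alpha|_Y$ converge weak$^*$ to $x^*$. The \textit{w}$^*$-\textit{w} PC hypothesis then gives weak convergence of these restrictions in $Y^*$. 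One also needs $\|(I-P)x_\alpha^*\|\to0$, which comes from $\|Px_\alpha^*\|\ge|x_\alpha^*(y)|\to 1$ for some suitable $y$, together with the $\ell_1$ splitting.
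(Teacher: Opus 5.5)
The paper does not prove this lemma. It is quoted from \cite[Theorem~6.2]{SMR}, so there is no in-paper proof to compare against. Your main argument is correct. It is the abstract form of the mechanism the paper itself uses for $c_0(I,X)$ in the proof of Theorem~\ref{T24}: split the third dual as an $\ell_1$-sum, kill the extra summand by norm counting, then invoke uniqueness of the norm preserving extension (Lemma~\ref{C22}) on what remains.

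Concretely, set $Z=\ker P^*=(PX^*)^\perp$, so that $X^{**}=Y^{\perp\perp}\oplus_\infty Z$. Let $\Phi$ be a norm one extension of $\tilde x^*$. The map $i^{**}$ carries $Y^{**}$ onto $Y^{\perp\perp}$ and sends $J_Y(y)$ to $J_X(y)$, so the functional $\Phi\circ i^{**}$ on $Y^{**}$ extends $x^*$. Hence $\|\Phi|_{Y^{\perp\perp}}\|\ge 1=\|\Phi\|$. Since the norm of a functional on an $\ell_\infty$-sum is the sum of the norms of its restrictions, this forces $\Phi|_Z=0$. Uniqueness in $Y$ then gives $\Phi\circ i^{**}=J_{Y^*}(x^*)$. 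So every norm one extension equals $J_{X^*}(\tilde x^*)$.

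Your fallback net argument is also valid, and it avoids the bidual entirely. One step needs fixing: ``$|x^*_\alpha(y)|\to1$ for some suitable $y$'' would require $x^*$ to attain its norm. Replace it with weak$^*$ lower semicontinuity of the norm on $Y^*$: $\liminf_\alpha\|Px^*_\alpha\|=\liminf_\alpha\|x^*_\alpha|_Y\|\ge\|x^*\|=1$.

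You should also repair the notation, because several of your identities are false as written. These are slips rather than gaps.
\begin{itemize}
\item $(Y^\perp)^\perp$ is $Y^{\perp\perp}$, so ``$X^{**}=Y^{\perp\perp}\oplus_\infty(Y^\perp)^\perp$'' lists the same space twice. The complement is $(PX^*)^\perp$, as you correctly state later.
\item $Y^{\perp\perp\perp}=(Y^{\perp\perp})^\perp$ is annihilated by restriction to $Y^{\perp\perp}$, so it cannot be the summand identified with $Y^{***}$. The correct splitting is $X^{***}=Z^\perp\oplus_1(Y^{\perp\perp})^\perp$, with $Z^\perp=\operatorname{ran}P^{**}$.
\item ``$\Phi$ vanishes on $(Y^\perp)^\perp$'' must read ``on $(PX^*)^\perp$''. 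As written, it contradicts your next line, which says the restriction to $Y^{**}$ has norm one.
\end{itemize}
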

In the next lemma, we identify $X$ with its canonical image in the bidual $X^{**}$.
\begin{lem}\cite[Lemma 2.14]{HWW}\label{C22}
    Let $x^*\in S(X^*)$. Then $x^*$ is a \textit{w}$^*$-\textit{w} PC if and only if $x^*$ has a unique norm preserving extension from $X$ to $X^{**}$.
\end{lem}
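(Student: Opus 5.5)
Identify $X$ with $J_X(X)\subseteq X^{**}$ and $X^*$ with $J_{X^*}(X^*)\subseteq X^{***}$. Write $N(x^*)=\{\Phi\in X^{***}:\ \|\Phi\|=\|x^*\|=1,\ \Phi|_X=x^*\}$ for the set of norm preserving extensions of $x^*$ to $X^{**}$. By Goldstine and weak$^*$ lower semicontinuity of the norm, $N(x^*)$ is nonempty, convex and weak$^*$ compact, and it always contains $J_{X^*}(x^*)$. So ``unique extension'' means exactly $N(x^*)=\{J_{X^*}(x^*)\}$. I will prove the two implications separately using nets in $X_1^*$ and the bidual pairing.

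\emph{($\Leftarrow$) Uniqueness implies w$^*$-w PC.} Let $(x^*_\alpha)\subseteq X_1^*$ be a net with $x^*_\alpha\to x^*$ weak$^*$. I need $x^{**}(x^*_\alpha)\to x^{**}(x^*)$ for every $x^{**}\in X^{**}$, i.e.\ $J_{X^*}(x^*_\alpha)\to J_{X^*}(x^*)$ in $\sigma(X^{***},X^{**})$. Take any subnet. The elements $J_{X^*}(x^*_\alpha)$ lie in the unit ball of $X^{***}$, which is $\sigma(X^{***},X^{**})$-compact by Banach--Alaoglu, so there is a further subnet converging to some $\Phi$ with $\|\Phi\|\le 1$. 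Restricting to $X\subseteq X^{**}$ gives $\Phi|_X=\lim x^*_\alpha=x^*$, hence $\|\Phi\|\ge\|x^*\|=1$. Thus $\Phi\in N(x^*)$, and uniqueness forces $\Phi=J_{X^*}(x^*)$. Since every subnet has a further subnet with this same limit, the whole net converges, which gives continuity of $Id$ at $x^*$.

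\emph{($\Rightarrow$) w$^*$-w PC implies uniqueness.} Let $\Phi\in N(x^*)$. By Goldstine's theorem applied to $X^*$, there is a net $(x^*_\alpha)$ in $X_1^*$ with $J_{X^*}(x^*_\alpha)\to\Phi$ in $\sigma(X^{***},X^{**})$. Restricting to $X$ gives $x^*_\alpha\to\Phi|_X=x^*$ weak$^*$ in $X^*$. Since $x^*$ is a w$^*$-w PC, $x^*_\alpha\to x^*$ weakly, i.e.\ $J_{X^*}(x^*_\alpha)\to J_{X^*}(x^*)$ in $\sigma(X^{***},X^{**})$. This topology is Hausdorff, so $\Phi=J_{X^*}(x^*)$.

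The one delicate point is the Goldstine step. The approximating net must lie in $X_1^*$ itself, not merely in a larger ball, because the continuity hypothesis is only about the restriction of $Id$ to $X_1^*$. This is where $\|\Phi\|=1$ is used: it puts $\Phi$ in the unit ball of $X^{***}$, so Goldstine supplies approximants from $X_1^*$. The same normalization $\|x^*\|=1$ is used in ($\Leftarrow$) to show that the limit $\Phi$ is norm preserving. Apart from this, both directions are routine compactness and Hausdorff-uniqueness arguments.
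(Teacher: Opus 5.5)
Your proof is correct. The Banach--Alaoglu subnet argument in $X^{***}$ gives ($\Leftarrow$), and Goldstine's theorem applied to $X^*$ (with approximants taken from $X_1^*$, as you rightly stress) together with the Hausdorffness of $\sigma(X^{***},X^{**})$ gives ($\Rightarrow$). The paper gives no proof of its own; it simply quotes Lemma 2.14 of Harmand--Werner--Werner, and your argument is the standard proof of that lemma.
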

\begin{fact}\cite[proposition~1.17]{HWW}
Let $Y\subseteq X$ be an M-ideal in $X$ and let $W\subseteq X$ be a closed subspace such that $Y\subseteq W\subseteq X$. Then $Y$ is an M-ideal in $W$.
\end{fact}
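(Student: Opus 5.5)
We need to prove that if $Y$ is an M-ideal in $X$ and $Y\subseteq W\subseteq X$ is a closed subspace, then $Y$ is an M-ideal in $W$. The plan is to pass to the dual of $W$ as a quotient of $X^*$ and push the L-projection on $X^*$ down to it. Let $P:X^*\to X^*$ be the norm one projection with $\ker P=Y^{\perp}$ and $X^*=Y^{\perp}\oplus_1 \mathrm{ran}\,P$, where $\mathrm{ran}\,P\cong Y^*$ via restriction. The L-decomposition says $\|x^*\|=\|Px^*\|+\|x^*-Px^*\|$ for every $x^*\in X^*$.

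Identify $W^*=X^*/W^{\perp}$ isometrically by Hahn–Banach, and write $Y^{\perp_W}=\{w^*\in W^*: w^*|_Y=0\}$. Since $Y\subseteq W$, we have $W^{\perp}\subseteq Y^{\perp}=\ker P$. Hence $P$ vanishes on $W^{\perp}$ and induces a well-defined operator
\[
\tilde P:X^*/W^{\perp}\to X^*/W^{\perp},\qquad \tilde P[x^*]=[Px^*].
\]
It is idempotent because $P$ is. Its kernel consists of the classes $[x^*]$ with $Px^*\in W^{\perp}$. For such $x^*$ we get $x^*|_Y=(x^*-Px^*)|_Y+Px^*|_Y=0$, since $x^*-Px^*\in Y^\perp$ and $Px^*\in W^\perp\subseteq Y^\perp$. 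Conversely, if $x^*|_Y=0$ then $Px^*=0$. Thus $\ker\tilde P=Y^{\perp_W}$, viewed as $Y^{\perp}/W^{\perp}$.

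The main step is the L-norm identity
\[
\|[x^*]\|=\|[Px^*]\|+\|[x^*-Px^*]\|.
\]
The inequality $\le$ is just the triangle inequality. For $\ge$, pick $z\in W^{\perp}$ with $\|x^*+z\|$ close to $\|[x^*]\|$. Since $P(x^*+z)=Px^*$, the L-property in $X^*$ gives
\[
\|x^*+z\|=\|Px^*\|+\|x^*+z-Px^*\|\ge \|[Px^*]\|+\|[x^*-Px^*]\|.
\]
Letting $\|x^*+z\|\to\|[x^*]\|$ yields the claim.

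In particular $\|\tilde P\|\le 1$, so $\tilde P$ is a norm one projection (or zero in the trivial case $Y=\{0\}$). We then have $W^*=Y^{\perp_W}\oplus_1\mathrm{ran}\,\tilde P$. Finally, restriction to $Y$ maps $\mathrm{ran}\,\tilde P$ isometrically onto $Y^*$. Indeed, $\|[Px^*]\|\ge\|Px^*|_Y\|=\|Px^*\|$, the last equality because $\mathrm{ran}\,P\cong Y^*$ isometrically, and the reverse inequality is trivial. Surjectivity follows since every $y^*\in Y^*$ is $Px^*|_Y$ for some $x^*$. This identifies $\mathrm{ran}\,\tilde P$ with $Y^*$, so $Y$ is an M-ideal in $W$.

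The only real obstacle is the inequality $\ge$ in the L-norm identity. It works precisely because $W^{\perp}\subseteq\ker P$, which in turn uses $Y\subseteq W$.
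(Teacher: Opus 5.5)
The paper gives no proof of this Fact; it quotes it from \cite[Proposition~1.17]{HWW}. Your argument is correct and self-contained. The three key steps all check out. First, $W^{\perp}\subseteq Y^{\perp}=\ker P$ makes $\tilde P$ well defined. Second, $\ker\tilde P=Y^{\perp}/W^{\perp}$ is exactly the annihilator of $Y$ in $W^*$. Third, the inequality $\|[x^*]\|\ge\|[Px^*]\|+\|[x^*-Px^*]\|$ follows by applying the $L$-decomposition to every representative $x^*+z$ with $z\in W^{\perp}$ and using $P(x^*+z)=Px^*$.

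Your last paragraph is automatic: once $W^*=Y^{\perp_W}\oplus_1\mathrm{ran}\,\tilde P$, restriction to $Y$ is always an isometry of the complementary summand onto $Y^*$. Checking it directly is harmless, though, and it matches the form of the definition the paper states.

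The other standard route is the $3$-ball characterization of $M$-ideals. $Y$ is an $M$-ideal in $X$ iff for all $y_1,y_2,y_3\in Y_1$, $x\in X_1$ and $\varepsilon>0$ there is $y\in Y$ with $\|x+y_i-y\|\le 1+\varepsilon$. This condition only weakens when $x$ is restricted to $W_1$, and the $y$ found already lies in $Y\subseteq W$, so the Fact is then a one-line observation. That route is shorter, but it rests on the nontrivial equivalence between the $3$-ball property and the $L$-summand definition. Yours works directly with the definition the paper uses (a norm-one projection on $X^*$ with kernel $Y^{\perp}$ and an $\ell^1$-decomposition) and needs nothing beyond $W^*\cong X^*/W^{\perp}$.
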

In the following theorem, we use Lemma~\ref{L21} and Lemma~\ref{C22} to provide an explicit form of \textit{w}$^*$-\textit{w} PCs in the dual unit ball $C(\Omega, X)_{1}^*$. We recall that the set of extreme points of $C(\Omega, X)_{1}^*$ is given by $\text{ext}(C(\Omega, X)_{1}^*)=\{\delta_{\omega}\otimes x^*:\omega\in \Omega, x^*\in \text{ext}(X_{1}^*)\}$ as established in \cite[Theorem~1.1(a)]{RS}.
\begin{thm}\label{T23}
    Let $\Lambda\in C(\Omega, X)_{1}^*$ be a \textit{w}$^*$-\textit{w} PC. Then $\Lambda=\sum_{i=1}^{\infty}\alpha_{i}\delta_{\omega_{i}}\otimes x_{{i}}^*$, where $\alpha_{i}\in(0,1]$ such that $\sum_{i=1}^{\infty}\alpha_{i}=1$, $x_{{i}}^*\in X_{1}^*$ is \textit{w}$^*$-\textit{w} PC for each $i\in \mathbb{N}$ and each $\omega_{i}\in \Omega$ is an isolated point.
\end{thm}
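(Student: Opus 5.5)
Since $\Lambda$ is a \textit{w}$^*$-\textit{w} PC of $C(\Omega, X)_{1}^*$, it has norm one (an interior point of the ball cannot be a \textit{w}$^*$-\textit{w} PC, because the \textit{w}$^*$-closure of a weak neighbourhood argument forces $\|\Lambda\|=1$). The plan has three stages: show that $\Lambda$ is purely atomic, show that its atoms sit at isolated points, and then identify the vector coefficients.

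\textbf{Step 1: $\Lambda$ is purely atomic.}
By Lemma~\ref{C22}, $\Lambda$ has a unique norm-preserving extension to $C(\Omega, X)^{**}$, namely $J(\Lambda)$. We identify $\Lambda$ with a measure in $M(\Omega, X^*)$ and write $\Lambda=\Lambda_a+\Lambda_c$, where $\Lambda_a$ is the atomic part and $\Lambda_c$ is the nonatomic part. Then $\|\Lambda\|=\|\Lambda_a\|+\|\Lambda_c\|$.

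Suppose $\Lambda_c\neq 0$. Because $\Lambda_c$ is nonatomic, we can use the Krein--Milman theorem together with the description of $\text{ext}(C(\Omega,X)_{1}^*)$ from \cite[Theorem~1.1(a)]{RS}. Concretely, pick extreme points $\delta_{\omega_\alpha}\otimes x_\alpha^*$ and form a net of finitely supported convex combinations $\Lambda_\alpha\to\Lambda$ \textit{w}$^*$ with $\|\Lambda_\alpha\|\le 1$. We want this net not to converge weakly. To test weak convergence we use the elements $x^{**}\otimes\chi_A$ of the bidual introduced in the introduction, with $A$ a finite set or a Borel set.

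One test: the functional $\Phi(G)=\sum_{\omega}\|G(\{\omega\})\|$, restricted to a suitable norming piece, separates atomic from nonatomic mass. A simpler alternative uses $\chi_{D}$ for the countable set $D$ of atoms. Take $x^{**}$ with $\|x^{**}\|=1$ almost norming $\Lambda_c(A)$ for a Borel set $A\subseteq \Omega\setminus D$ where $\Lambda_c(A)\neq 0$, and arrange the approximants so that $\Lambda_\alpha(A')$ stays away from $\Lambda_c(A)$. Either way, the element $x^{**}\otimes\chi_A$ differs on $\Lambda_\alpha$ and $\Lambda$ uniformly. This contradicts \textit{w}$^*$-\textit{w} continuity at $\Lambda$. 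Hence $\Lambda_c=0$, and
\[
\Lambda=\sum_i \delta_{\omega_i}\otimes y_i^* ,\qquad \sum_i\|y_i^*\|=1 .
\]

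\textbf{Step 2: each $\omega_i$ is isolated.}
Suppose some atom $\omega_1$ is not isolated. Take a net $t_\beta\to\omega_1$ with $t_\beta\neq\omega_1$ and $t_\beta\notin\{\omega_i\}$, which is possible unless the atoms accumulate; in that case we pass to a subnet avoiding the countable set. Set
\[
\Lambda_\beta=\Lambda-\delta_{\omega_1}\otimes y_1^*+\delta_{t_\beta}\otimes y_1^* .
\]
Then $\Lambda_\beta\to\Lambda$ \textit{w}$^*$ and $\|\Lambda_\beta\|\le 1$. However, $(x^{**}\otimes\chi_{\{\omega_1\}})(\Lambda_\beta-\Lambda)=-x^{**}(y_1^*)$, which can be made nonzero by choosing $x^{**}$ with $x^{**}(y_1^*)\neq 0$. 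This contradicts weak convergence, so every $\omega_i$ is isolated.

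\textbf{Step 3: the coefficients are \textit{w}$^*$-\textit{w} PCs.}
Put $\alpha_i=\|y_i^*\|$ and $x_i^*=y_i^*/\alpha_i$, discarding the zero terms. If finitely many atoms remain, the sum is padded or read as finite. Fix $i$ and consider a net $z_\gamma^*\to x_i^*$ \textit{w}$^*$ in $X_1^*$. Then
\[
\Lambda_\gamma=\Lambda+\alpha_i\,\delta_{\omega_i}\otimes (z_\gamma^*-x_i^*)
\]
has norm at most $1$ and converges \textit{w}$^*$ to $\Lambda$, since $\omega_i$ is isolated and $\chi_{\{\omega_i\}}$ is continuous. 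By the hypothesis on $\Lambda$, $\Lambda_\gamma\to\Lambda$ weakly.

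Since $\omega_i$ is isolated, the map $x\mapsto x\chi_{\{\omega_i\}}$ embeds $X$ isometrically into $C(\Omega,X)$, and its range is complemented. The dual restriction map is weak-to-weak continuous, so $z_\gamma^*\to x_i^*$ weakly. Hence each $x_i^*$ is a \textit{w}$^*$-\textit{w} PC. One could alternatively argue via Lemma~\ref{L21}, since $C(\{\omega_i\},X)$ is an M-ideal summand.

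\textbf{Main obstacle.}
The delicate step is Step 1: producing \textit{w}$^*$-convergent norm-bounded approximants of the diffuse part that fail to converge weakly. I expect to make this rigorous by approximating $\Lambda_c$ \textit{w}$^*$ by finitely supported measures with supports disjoint from $\text{supp}$-atoms, and then testing against $x^{**}\otimes\chi_A$ for a Borel set $A$ carrying the nonatomic mass. If that test is too weak, the fallback is to apply the uniqueness of the norm-preserving extension from Lemma~\ref{C22}. In that approach we exhibit two distinct Hahn--Banach extensions of $\Lambda$ on $C(\Omega,X)^{**}$ whenever $\Lambda_c\neq 0$, by modifying $J(\Lambda)$ using a nonatomic-detecting singular functional.
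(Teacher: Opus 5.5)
\textbf{Step~1 has a genuine gap.} Step~1 carries the real content of the theorem, and as written it is not a proof. The concrete tests you propose do not work:
\begin{itemize}
\item The quantity $G\mapsto\sum_{\omega}\|G(\{\omega\})\|$ is not linear. It is therefore not an element of $C(\Omega,X)^{**}$ and cannot witness failure of weak convergence.
\item A test $x^{**}\otimes\chi_A$ with $A$ carrying the nonatomic mass fails, because weak$^*$ approximants of $\Lambda_c$ must put their mass where $\Lambda_c$ lives. For Lebesgue measure on $[0,1]$ and the Riemann sums $\frac1n\sum_{k\le n}\delta_{k/n}$, one has $\Lambda_n(A)\to\Lambda(A)$ for every interval $A$. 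Nothing in your sketch ``arranges'' otherwise.
\item Taking $A$ disjoint from all the supports only works for a countable approximating family. In general you need genuine nets, whose supports can be uncountable. For $\Omega=\beta\mathbb{N}$, no sequence of finitely supported measures converges weak$^*$ to a nonzero nonatomic measure, since $\ell^\infty$ is a Grothendieck space.
\item The Hahn--Banach fallback via Lemma~\ref{C22} is not carried out at all.
\end{itemize}

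\textbf{The missing idea.} Use the hypothesis in the forward direction instead of trying to build a net that fails to converge weakly. The finitely supported convex combinations $\Lambda_\alpha$ of the extreme points $\delta_\omega\otimes x^*$ lie in the unit ball and converge weak$^*$ to $\Lambda$. By \textit{w}$^*$-\textit{w} continuity they therefore converge weakly. So $\Lambda$ lies in the weak closure of a convex set, which by Mazur's theorem equals its norm closure. If $\|\Lambda_n-\Lambda\|\to0$ with each $\Lambda_n$ finitely supported and $D=\bigcup_n\operatorname{supp}\Lambda_n$, then $|\Lambda|(\Omega\setminus D)\le\|\Lambda-\Lambda_n\|\to0$. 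Hence $\Lambda$ is purely atomic with countably many atoms. This is exactly the paper's argument.

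Equivalently, the purely atomic measures form a norm-closed, hence weakly closed, subspace of $M(\Omega,X^*)$. The ``nonatomic-detecting functional'' you want is $G\mapsto\phi(G_c)$, where $G\mapsto G_c$ is the norm-one $L$-projection onto the nonatomic part and $\phi$ norms $\Lambda_c$. This is a fixed element of the bidual that vanishes on every $\Lambda_\alpha$ but not at $\Lambda$.

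\textbf{Steps 2 and 3.} With Step~1 repaired, your Steps~2 and~3 are fine. Step~3 is in fact more explicit than the paper's ``similar argument.'' Two small corrections:
\begin{itemize}
\item In Step~2 you cannot in general pass to a subnet avoiding the other atoms; take $\Omega=\mathbb{N}\cup\{\infty\}$ with an atom at every point. You do not need to, since only $t_\beta\neq\omega_1$ and the triangle inequality are used.
\item $\|\Lambda\|=1$ holds because $C(\Omega,X)$ is non-reflexive for infinite $\Omega$; in a reflexive space $0$ is a \textit{w}$^*$-\textit{w} PC. This is what \cite[Lemma 2.4]{SMR} supplies, not the closure argument you give.
\end{itemize}
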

\begin{proof}
    Let $\Lambda$ be a \textit{w}$^*$-\textit{w} PC in $C(\Omega, X)^*_{1}$. Since $C(\Omega, X)^*_{1}$ is weak$^*$ compact, by Krein-Milman theorem (see \cite[Theorem~3.23]{RW}), we have  $\Lambda\in \overline{co}^{w^*}\{\delta_{\omega}\otimes x^*:x^*\in \text{ext}(X_{1}^*)\text{ and } \omega\in \Omega\}$, and hence, $\Lambda\in \overline{co}^\textit{w}\{\delta_{\omega}\otimes x^*:x^*\in \text{ext}(X_{1}^*)\text{ and } \omega\in \Omega\}=\overline{co}^{\|.\|}\{\delta_{\omega}\otimes x^*:x^*\in \text{ext}(X_{1}^*)\text{ and } \omega\in \Omega\}$. This implies that supp$(\Lambda)$ is at most countable. Let supp$(\Lambda)=\{\omega_{n}:n\in \mathbb{N}\}$. Since $\Lambda$ is a \textit{w}$^*$-\textit{w} PC, we have $\|\Lambda\|=1$ (see \cite[Lemma 2.4]{SMR}) i.e, $\sum_{i=1}^{\infty}\|\Lambda(\{\omega_{i}\})\|=1$. Let $x_{n}^*=\frac{\Lambda(\{\omega_{n}\})}{\|\Lambda(\{\omega_{n}\})\|} \text{ and } \alpha_{n}=\|\Lambda(\{\omega_{n}\})\|$. Then it is easy to see that $\Lambda=\sum_{i=1}^{\infty}\alpha_{i}\delta_{\omega_{i}}\otimes x_{i}^*$. Now we will show that each $\omega_{n}\in \Omega$ is an isolated point and each $x_{n}^*$ is a \textit{w}$^*$-\textit{w} PC in $X_{1}^*$. Fix $i_{0}\in \mathbb{N}$ and let $(\omega_{\alpha})_{\alpha\in J}\subseteq \Omega$ be a net of distinct points such that $\omega_{\alpha}\to \omega_{i_{0}}$. This implies $\delta_{\omega_{\alpha}}\to \delta_{\omega_{i_{0}}}$ in the weak$^*$ topology. For $\alpha\in J$, we consider $\Lambda_{\alpha}\in C(\Omega, X)_{1}^*$ given as $\Lambda_{\alpha}=\left(\sum_{j\neq i_{0}}\alpha_{j}\delta_{\omega_{j}}\otimes x_{j}^*\right)+\alpha_{i_{0}}\delta_{\omega_{\alpha}}\otimes x_{i_{0}}^*$. It is easy to see that $\|\Lambda_{\alpha}\|=1$ and $\Lambda_{\alpha}\to \Lambda$ in the weak$^*$ topology. Thus, by the hypothesis, we get $\Lambda_{\alpha}\to \Lambda$ in the weak topology. Since $\|x_{i_{0}}^*\|=1$, we can choose $x_{i_{0}}\in X$ such that $x_{i_{0}}^*(x_{i_{0}})\neq0$. If we consider an element of the form $x_{i_{0}}\otimes\chi_{\omega_{i_{0}}}\in C(\Omega, X)^{**}$, then we have $x_{i_{0}}\otimes\chi_{\omega_{i_{0}}}(\Lambda_{\alpha})\to x_{i_{0}}\otimes\chi_{\omega_{i_{0}}}(\Lambda)$. Consequently, we get $x_{i_{0}}^*(x_{i_{0}})=0$, which is a contradiction to our choice of $x_{i_{0}}$. Thus $\omega_{i_{0}}$ is an isolated point. Similar argument will show that $x_{i_{0}}^*$ is a \textit{w}$^*$-\textit{w} PC in $X_{1}^*$. 
\end{proof}

The corollary below is an immediate consequence of Theorem~\ref{T23}.
\begin{cor}
    Let $X$ be a reflexive Banach space and let $\Lambda\in C(\Omega, X)_{1}^*$ is a \textit{w}$^*$-\textit{w} PC. Then $\Lambda=\sum_{i=1}^{\infty}\alpha_{i}\delta_{\omega_{i}}\otimes x_{{i}}^*$, where $\alpha_{i}\in(0,1]$ such that $\sum_{i=1}^{\infty}\alpha_{i}=1$, $x_{{i}}^*\in S(X^*)$ and each $\omega_{i}\in \Omega$ is an isolated point.
\end{cor}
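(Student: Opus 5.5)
The corollary follows from Theorem~\ref{T23}; only the assertion $x_i^*\in S(X^*)$ needs an extra remark. Let $\Lambda\in C(\Omega, X)_{1}^*$ be a \textit{w}$^*$-\textit{w} PC. Theorem~\ref{T23} applies without any hypothesis on $X$. It gives the representation $\Lambda=\sum_{i=1}^{\infty}\alpha_{i}\delta_{\omega_{i}}\otimes x_{i}^*$ with $\alpha_i\in(0,1]$, $\sum_{i}\alpha_i=1$, and each $\omega_i$ an isolated point of $\Omega$. It also says each $x_i^*\in X_1^*$ is a \textit{w}$^*$-\textit{w} PC of $X_1^*$.

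Next I check that $\|x_i^*\|=1$. This is already visible in the proof of Theorem~\ref{T23}. There $\alpha_n=\|\Lambda(\{\omega_n\})\|>0$, since $\omega_n$ lies in the support, and $x_n^*=\Lambda(\{\omega_n\})/\|\Lambda(\{\omega_n\})\|$, so $x_n^*$ is normalized by construction. Alternatively, one can cite \cite[Lemma 2.4]{SMR}: a \textit{w}$^*$-\textit{w} PC of a dual unit ball has norm one. Either way, $x_i^*\in S(X^*)$.

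Reflexivity is where the statement becomes clean rather than where it is needed. When $X$ is reflexive, the weak and weak$^*$ topologies on $X^*$ coincide. Hence every point of $X_1^*$ is trivially a \textit{w}$^*$-\textit{w} PC, and the PC condition on the $x_i^*$ carries no extra information. This is why the corollary drops it and records only $x_i^*\in S(X^*)$. Equivalently, via Lemma~\ref{C22}, $X^{**}=X$, so every norm-one functional has a unique norm-preserving extension.

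There is no real obstacle here. The only point to watch is that the norm-one claim comes from the normalization in Theorem~\ref{T23} (or from \cite[Lemma 2.4]{SMR}) and not from reflexivity.
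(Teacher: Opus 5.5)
Your argument is correct and is the paper's own: the corollary is read off from Theorem~\ref{T23}. The bound $\|x_i^*\|=1$ comes from the normalization $x_n^*=\Lambda(\{\omega_n\})/\|\Lambda(\{\omega_n\})\|$ in its proof, and reflexivity only makes the \textit{w}$^*$-\textit{w} PC condition on the $x_i^*$ vacuous.

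Do drop the ``alternative'' appeal to \cite[Lemma 2.4]{SMR} for the $x_i^*$. For reflexive $X$ every point of $X_1^*$, including $0$, is a \textit{w}$^*$-\textit{w} PC, as your own third paragraph observes. That lemma can only hold for non-reflexive spaces, such as $C(\Omega,X)$, where the paper applies it to $\Lambda$, so it cannot give $\|x_i^*\|=1$ here.
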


For a general compact Hausdorff space $\Omega$ and a Banach space $X$, we do not know if the converse of Theorem~\ref{T23} is always true. In the following theorem, we show that if $\Omega$ is an extremally disconnected space, one can obtain the converse for any Banach space $X$.
\begin{thm}\label{T24}
    Let $\Omega$ be an extremally disconnected space and let $\Lambda \in C(\Omega, X)_{1}^*$ be such that $\Lambda=\sum_{i=1}^{\infty}\alpha_{i}\delta_{\omega_{i}}\otimes x_{{i}}^*$, where $\alpha_{i}\in(0,1]$ such that $\sum_{i=1}^{\infty}\alpha_{i}=1$, $x_{{i}}^*\in X_{1}^*$ is \textit{w}$^*$-\textit{w} PC for each $i\in \mathbb{N}$ and each $\omega_{i}\in \Omega$ is an isolated point. Then $\Lambda$ is a \textit{w}$^*$-\textit{w} PC.
\end{thm}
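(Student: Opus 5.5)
Since $\Omega$ is extremally disconnected, $\overline{I}$ is clopen. Set $Y=\{f\in C(\Omega,X): f|_{\Omega\setminus I}=0\}$, i.e.\ the functions supported on $\overline{I}$ that vanish on $\overline{I}\setminus I$. The plan is to show that $\Lambda$ is a \textit{w}$^*$-\textit{w} PC in $Y^*_1$, and then lift this to $C(\Omega,X)^*_1$ with Lemma~\ref{L21}.

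First, $Y$ is an M-ideal in $C(\Omega,X)$. Indeed, $Y=\{f: f|_K=0\}$ with $K=\Omega\setminus I$ closed, and such ideals are M-ideals in $C(\Omega,X)$. The projection $P$ on $M(\Omega,X^*)$ is restriction of measures to $I$, and $M(\Omega,X^*)=M(K,X^*)\oplus_1 M(I,X^*)$. Next, $Y$ is isometric to $c_0(I,X)$: continuity at the points of $\overline{I}\setminus I$ together with vanishing there forces $\|f(k)\|\to 0$ along the directed set of finite subsets of $I$. Hence $Y^*\cong \ell^1(I,X^*)$, and $\Lambda$ corresponds to $(\alpha_i x_i^*)$ placed at the coordinates $\omega_i$. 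Since $\Lambda$ lives on $I$, we have $P\Lambda=\Lambda$, so $\Lambda\in S(Y^*)$. By Lemma~\ref{L21} it therefore suffices to show that $\Lambda$ is a \textit{w}$^*$-\textit{w} PC of $\ell^1(I,X^*)_1=c_0(I,X)^*_1$.

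For this I would use Lemma~\ref{C22}: it is enough to show that $\Lambda$ has a unique norm-preserving extension to $c_0(I,X)^{**}=\ell^\infty(I,X^{**})$. Let $\Phi\in \ell^\infty(I,X^{**})^*$ with $\|\Phi\|=1$ and $\Phi|_{c_0(I,X)}=\Lambda$. Decompose $\ell^\infty(I,X^{**})^*=\ell^1(I,X^{***})\oplus_1 c_0(I,X^{**})^{\perp}$; this is again the M-ideal decomposition, since $c_0(I,X^{**})$ is an M-ideal in $\ell^\infty(I,X^{**})$. Write $\Phi=(\phi_k)_k+\psi$. For each $k$, restricting $\Phi$ to $e_kX$ gives $\phi_k|_X=\Lambda(\{k\})$, so $\|\phi_k\|\ge \alpha_i$ when $k=\omega_i$. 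Then
\[
1=\|\Phi\|=\sum_k\|\phi_k\|+\|\psi\|\ge \sum_i\alpha_i+\|\psi\|=1+\|\psi\|,
\]
which forces $\psi=0$, $\phi_k=0$ for $k\notin\{\omega_i\}$, and $\|\phi_{\omega_i}\|=\alpha_i$. Thus $\phi_{\omega_i}/\alpha_i$ is a norm-preserving extension of $x_i^*$ to $X^{**}$. By Lemma~\ref{C22} applied to the \textit{w}$^*$-\textit{w} PC $x_i^*$, it equals $J_{X^*}(x_i^*)$. Hence $\Phi$ is the canonical extension, and uniqueness holds.

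The main obstacle is the first step: the identification $Y\cong c_0(I,X)$ and the verification that $Y$ is an M-ideal with the dual splitting above. This is where extremal disconnectedness enters, through the clopen set $\overline{I}$; alternatively it can be seen via the standard fact that ideals of functions vanishing on a closed set are M-ideals. I would first check this carefully by hand, using Urysohn-type functions to get the $\ell^1$-additivity of the norm on $M(I,X^*)\oplus M(K,X^*)$. The remaining steps only use the $\oplus_1$-norm computation and Lemma~\ref{C22}.
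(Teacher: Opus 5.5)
Your proof is correct. Its second half is exactly the paper's: the decomposition $\ell^\infty(I,X^{**})^*=\ell^1(I,X^{***})\oplus_1 c_0(I,X^{**})^{\perp}$, the norm count forcing $\psi=0$ and $\|\phi_{\omega_i}\|=\alpha_i$, Lemma~\ref{C22} applied to each $x_i^*$, and the final appeal to Lemma~\ref{L21}. You differ only in how you obtain the M-ideal.

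The paper first assumes $I$ is dense. It then uses the sandwich $c_0(I,X)\subseteq C(\Omega,X)\subseteq \ell^\infty(I,X)$, where the second inclusion is restriction and needs density, together with the fact that an M-ideal of $\ell^\infty(I,X)$ stays an M-ideal in intermediate subspaces. Non-dense $I$ is handled by splitting off the clopen set $\overline{I}$ as an M-summand and invoking Lemma~\ref{L21} a second time. You instead identify $c_0(I,X)$ with the ideal $Y$ of functions vanishing on $\Omega\setminus I$, and check directly that restricting measures to $I$ is an L-projection with kernel $Y^\perp$. This removes the case split and uses Lemma~\ref{L21} only once.

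Contrary to your closing remark, extremal disconnectedness plays no role in your argument:
\begin{itemize}
\item $I$ is open in every compact Hausdorff space, so $\Omega\setminus I$ is closed.
\item $Y\cong c_0(I,X)$ holds because each set $\{\omega:\|f(\omega)\|\ge\epsilon\}$ is a compact subset of the discrete set $I$, hence finite.
\item $\|\mu\|=|\mu|(I)+|\mu|(\Omega\setminus I)$ is automatic for the variation norm, so no Urysohn functions are needed.
\item The only thing to check is $Y^\perp=\ker P$. This follows from $\mu(e_kx)=\mu(\{k\})(x)$, from $Y=\overline{\mathrm{span}}\{e_kx\}$, and from $|\mu|(I)=\sum_{k\in I}\|\mu(\{k\})\|$, which holds by inner regularity.
\end{itemize}
Your opening sentence about $\overline{I}$ being clopen is never used. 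Your route therefore proves the converse of Theorem~\ref{T23} for every compact Hausdorff $\Omega$, which the paper leaves as an open question. The paper needs the hypothesis only to reduce to the dense case, where its sandwich into $\ell^\infty(I,X)$ is available.

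As in the paper, you tacitly assume $\|x_i^*\|=1$ and that the $\omega_i$ are distinct. You need both to get $\|\Lambda\|=1$ and $\|\phi_{\omega_i}\|\ge\alpha_i$, and both are implicit in the statement.
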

\begin{proof}
    
    First we assume that $I$ is dense in $\Omega$ and let $\Lambda=\sum_{i=1}^{\infty}\alpha_{i}\delta_{\omega_{i}}\otimes x_{{i}}^*$, where $\alpha_{i},\omega_{i} \text{ and } x_{i}^*$ satisfies the hypothesis of the theorem. Observe that $c_{0}(I, X)\subseteq C(\Omega, X)\subseteq l^{\infty}(I, X)$. Indeed, let $f\in c_{0}(I, X)$. Define $f':\Omega\to X$ by

    $$
       f'(\omega)=
    \begin{cases}
       f(\omega), & \omega \in I,\\
       0, & \omega\in \Omega\setminus I.
    \end{cases}
    $$
    It is enough to show that $f'$ is continuous on $\Omega\setminus I$. Let $\omega\in\Omega\setminus I$ and $(\omega_{\alpha})_{\alpha\in J}\subseteq \Omega$ be a net of distinct points such that $\omega_{\alpha}\to \omega$. Since $f$ vanishes at infinity and $I$ is a discrete set, the set $\{\omega_{\alpha}:\|f'(\omega_{\alpha})\|\geq\epsilon\}\subseteq I$ is finite for every $\epsilon>0$. This implies that $f'(\omega_{\alpha})\to 0=f'(\omega)$. Hence $f'\in C(\Omega, X)$ and $\|f\|=\|f'\|$. To establish the latter inclusion, one can use the restriction map. Since $c_{0}(I, X)\subseteq l^{\infty}(I, X)$ is an M-ideal, we have $c_{0}(I, X)\subseteq C(\Omega, X)$ is an M-ideal. It is enough to show that $\Lambda\in c_{0}(I, X)^*$ is \textit{w}$^*$-\textit{w} PC.

    Let $\hat{\Lambda}$ denotes the canonical image of $\Lambda$ in the space $c_{0}(I, X)^{***}$. We claim that $\hat{\Lambda}\in c_{0}(I, X)^{***}$ is the only norm preserving extension of $\Lambda$ from $c_{0}(I, X)$ to $c_{0}(I, X)^{**}$. If possible $\Lambda'\in c_{0}(I, X)^{***}$ is such that $\Lambda'|_{c_{0}(I, X)}=\Lambda$ and $\|\Lambda'\|=\|\Lambda\|=1$. We write $c_{0}(I, X)^{***}=l^{\infty}(I, X^{**})^*=c_{0}(I, X^{**})^{\perp}\oplus_{1}l^{1}(I, X^{***})$. Then $\Lambda'=\phi_{1}+\phi_{2}$, $\|\Lambda'\|=\|\phi_{1}\|+\|\phi_{2}\|=1$, where $\phi_{1}\in c_{0}(I, X^{**})^{\perp}$ and $\phi_{2}\in l^{1}(I, X^{***})$. Fix $i\in \mathbb{N}$ and consider $\phi_{2}(\omega_{i})\in X^{***}$ (${\omega_{i}}^{th}$ coordinate of $\phi_{2}$). Then for any $x\in X$, we have  $\phi_{2}(\omega_{i})(x)=\phi_{2}(e_{\omega_{i}}x)=\Lambda'(e_{\omega_{i}}x)=\Lambda(e_{\omega_{i}}x)=\alpha_{i}x_{i}^*(x)$. Thus we get $\phi_{2}(\omega_{i})|_{X}=\alpha_{i}x_{i}^*$, and hence, $\|\phi_{2}(\omega_{i})\|\geq\alpha_{i}$ for each $i\in \mathbb{N}$. Since 
    $$1\geq\|\phi_{2}\|\geq\sum_{i=1}^{\infty}\|\phi_{2}(\omega_{i})\|\geq\sum_{i=1}^{\infty}\alpha_{i}=1,$$ 
    
    we get that $\|\phi_{2}\|=1$. This implies $\phi_{1}=0 \text{ and } \Lambda'=\phi_{2}$. It is easy to see that supp$(\phi_{2})=\{\omega_{i}:i\in \mathbb{N}\}$. We have 
    $$\|\phi_{2}\|=1=\sum_{i=1}^{\infty}\alpha_{i}=\sum_{i=1}^{\infty}\|\phi_{2}(\omega_{i})\|.$$
    
    It gives that $\|\phi_{2}(\omega_{i})\|=\alpha_{i}$. By the hypothesis, each $x_{i}^*$ is \textit{w}$^*$-\textit{w} PC. By applying Lemma~\ref{C22} on $X^*$, we get $\phi_{2}(\omega_{i})=\alpha_{i}x_{i}^*$. This implies that $\phi_{2}=\Lambda'=\hat\Lambda$.  Hence $\Lambda$ has a unique norm preserving extension from $c_{0}(I, X)$ to $c_{0}(I, X)^{**}$. By applying Lemma~\ref{C22} on $c_{0}(I, X)^*$, we have $\Lambda\in c_{0}(I, X)^{*}_{1}$ is a \textit{w}$^*$-\textit{w} PC . Hence, the conclusion follows from Lemma~\ref{L21}.

    If $I$ is not dense in $\Omega$, since $\Omega$ is an extremally disconnected space, it follows that $\overline{I}$ is clopen. So we can write $C(\Omega, X)=C(\overline{I},X)\oplus_{\infty}C(\overline{I}^c,X)$. Now we may assume $\Lambda\in C(\overline{I},X)_{1}^*$. By applying part~(1) on the space $C(\overline{I},X)$ gives that $\Lambda\in C(\overline{I},X)_{1}^*$ is a \textit{w}$^*$-\textit{w} PC. Since an M-summand is also an M-ideal, the conclusion follows from Lemma~\ref{L21}.
\end{proof}

We recall from the introduction that $c_{0}$ denotes the space of all null sequences, equipped with the supremum norm. It is well-known that weak, weak$^*$ and norm topologies coincide on $S(c_{0}^*)$. For a Banach space $X$, we recall that $\mathcal{K}(X)$ denotes the space of all the compact operators on $X$, while $\mathcal{L}(X)$ denotes the set of all bounded operators on $X$. It is known (see \cite[pg.~291, Proposition~4.6]{HWW}) that if $\mathcal{K}(X)$ is an M-ideal in $\mathcal{L}(X)$, then weak$^*$ and norm topologies coincide on $S(X^*)$, and hence, weak and norm topologies coincide on $S(X^*)$. Let $X$ be a Banach space such that weak and norm topologies coincide on $S(X^*)$. The following proposition provides a situation where the converse of Theorem~\ref{T23} holds for each compact Hausdorff space $\Omega$.

\begin{prop}
   Let $X$ be such that the weak and norm topologies coincide on $S(X^*)$ and let $\Lambda\in C(\Omega, X)_{1}^*$. If $\Lambda=\sum_{i=1}^{\infty}\alpha_{i}\delta_{\omega_{i}}\otimes x_{{i}}^*$, where $\alpha_{i}\in(0,1]$ such that $\sum_{i=1}^{\infty}\alpha_{i}=1$, $x_{{i}}^*\in X_{1}^*$ is \textit{w}$^*$-\textit{w} PC for each $i\in \mathbb{N}$ and each $\omega_{i}\in \Omega$ is an isolated point, then $\Lambda$ is a \textit{w}$^*$-\textit{w} PC.
\end{prop}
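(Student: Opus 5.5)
Since the weak and norm topologies coincide on $S(X^*)$, every point of $S(X^*)$ is a \textit{w}$^*$-\textit{w} PC only in the sense assumed; the useful consequence is the following. If $(x_\alpha^*)$ is a net in $X_1^*$ with $x_\alpha^*\to x^*$ weak$^*$, $\|x^*\|=1$ and $x^*$ a \textit{w}$^*$-\textit{w} PC, then $x_\alpha^*\to x^*$ weakly. By weak lower semicontinuity of the norm, $\|x_\alpha^*\|\to 1$, so $x_\alpha^*/\|x_\alpha^*\|\to x^*$ weakly in $S(X^*)$, hence in norm. This upgrades each \textit{w}$^*$-\textit{w} PC $x_i^*$ to a \textit{w}$^*$-norm point of continuity, and the aim is to prove the stronger fact that $\Lambda$ is a \textit{w}$^*$-PC, which trivially gives \textit{w}$^*$-\textit{w} PC.

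Take a net $\Lambda_\beta\to\Lambda$ weak$^*$ in $C(\Omega,X)_1^*$. Fix $\varepsilon>0$ and choose $N$ with $\sum_{i>N}\alpha_i<\varepsilon$. Since each $\omega_i$ is isolated, $\chi_{\{\omega_i\}}x\in C(\Omega,X)$ for every $x\in X$. Hence $\Lambda_\beta(\{\omega_i\})\to\alpha_i x_i^*$ weak$^*$ in $X^*$ for each $i\le N$. Write $\Lambda_\beta(\{\omega_i\})=\alpha_i y_{\beta,i}^*$; for $\beta$ large, $\|y_{\beta,i}^*\|\le 1/\alpha_i\cdot\|\Lambda_\beta(\{\omega_i\})\|$. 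Combining the paragraph above with lower semicontinuity $\liminf\|\Lambda_\beta(\{\omega_i\})\|\ge\alpha_i$ gives $\Lambda_\beta(\{\omega_i\})\to\alpha_ix_i^*$ in norm. Here I would apply the normalisation trick to $\Lambda_\beta(\{\omega_i\})/\|\Lambda_\beta(\{\omega_i\})\|$, which lies in $S(X^*)$ and converges weak$^*$ to $x_i^*$. The step I expect to be the main obstacle is turning this weak$^*$ convergence into norm convergence. Because the hypothesis only yields weak continuity at $x_i^*$ on the ball, I would first show the normalised net converges weakly, using the \textit{w}$^*$-\textit{w} PC property of $x_i^*$, and then invoke coincidence of the weak and norm topologies on $S(X^*)$.

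Next comes the mass estimate. For large $\beta$,
\[
\sum_{i\le N}\|\Lambda_\beta(\{\omega_i\})\|\ge \sum_{i\le N}\alpha_i-\varepsilon\ge 1-2\varepsilon,
\]
so since $\|\Lambda_\beta\|\le1$, the restriction of $\Lambda_\beta$ to $\Omega\setminus\{\omega_1,\dots,\omega_N\}$ has total variation at most $2\varepsilon$. Then
\[
\|\Lambda_\beta-\Lambda\|\le\sum_{i\le N}\|\Lambda_\beta(\{\omega_i\})-\alpha_ix_i^*\|+2\varepsilon+\varepsilon,
\]
which is eventually below $4\varepsilon$. Thus $\Lambda_\beta\to\Lambda$ in norm, and in particular weakly, proving that $\Lambda$ is a \textit{w}$^*$-\textit{w} PC.
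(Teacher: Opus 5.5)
You follow the same two-step skeleton as the paper. First you upgrade each $x_i^*$ from a \textit{w}$^*$-\textit{w} PC to a \textit{w}$^*$-PC, using the coincidence of the weak and norm topologies on $S(X^*)$. Then you show that $\Lambda$ is a \textit{w}$^*$-PC. The difference is in the second step. The paper simply invokes the Hu--Smith characterization of \textit{w}$^*$-PCs of $C(\Omega,X)_1^*$ \cite[Theorem~6]{HS94}. You instead reprove its sufficiency half directly, splitting $\Lambda_\beta$ into its atoms at $\omega_1,\dots,\omega_N$ plus a tail of small variation. Your version is longer but self-contained. Your normalisation argument also supplies the justification the paper omits for its claim that each $x_i^*$ is a \textit{w}$^*$-PC.

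One step needs repair. Lower semicontinuity only gives $\liminf_\beta\|\Lambda_\beta(\{\omega_i\})\|\ge\alpha_i$. That alone does not make $\Lambda_\beta(\{\omega_i\})/\|\Lambda_\beta(\{\omega_i\})\|$ converge weak$^*$ to $x_i^*$: along a subnet on which the norms tend to some $c>\alpha_i$, the weak$^*$ limit is $(\alpha_i/c)x_i^*\notin S(X^*)$. You also cannot apply the \textit{w}$^*$-\textit{w} PC property to $\Lambda_\beta(\{\omega_i\})/\alpha_i$ directly, since that net need not lie in $X_1^*$. The missing upper bound comes from the mass constraint, which you only use later. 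For $\beta$ large,
\[
\|\Lambda_\beta(\{\omega_i\})\|\le\|\Lambda_\beta\|-\sum_{j\le N,\,j\ne i}\|\Lambda_\beta(\{\omega_j\})\|\le 1-\sum_{j\le N,\,j\ne i}\alpha_j+\varepsilon<\alpha_i+2\varepsilon,
\]
so $\|\Lambda_\beta(\{\omega_i\})\|\to\alpha_i$. With this placed before the normalisation, the rest of your argument goes through.

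Finally, both you and the paper tacitly read the hypothesis with $\|x_i^*\|=1$ and the $\omega_i$ distinct. You need this for $|\Lambda|(\{\omega_i\})=\alpha_i$ and for the tail bound, so it should be stated explicitly. Without it the statement can fail for reflexive $X$ (e.g.\ a Hilbert space, which satisfies the hypothesis). There every point of $X_1^*$ is a \textit{w}$^*$-\textit{w} PC, so an $x_i^*$ of norm less than $1$ is allowed. That would force $\|\Lambda\|<1$, which is impossible for a \textit{w}$^*$-\textit{w} PC of the dual ball of the non-reflexive space $C(\Omega,X)$.
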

\begin{proof}
    Let $\Lambda = \sum_{i=1}^{\infty} \alpha_{i} \delta_{\omega_{i}} \otimes x_{i}^*$, where $\alpha_{i} \in (0,1]$ with $\sum_{i=1}^{\infty} \alpha_{i} = 1$, $x_{i}^* \in X_{1}^*$ is a \textit{w}$^*$-\textit{w} PC for each $i \in \mathbb{N}$, and each $\omega_{i} \in \Omega$ is an isolated point. Since weak and norm topologies coincide on $S(X^*)$, we get that each $x_{i}^*\in X_{1}^*$ is a \textit{w}$^*$-PC. By \cite[Theorem~6]{HS94}, we have that $\Lambda\in C(\Omega, X)_{1}^*$ is a \textit{w}$^*$-PC, and hence, a \textit{w}$^*$-\textit{w} PC. This completes the proof.
\end{proof}

As stated in Theorem~\ref{T31}, the Bishop-Phelps theorem asserts that norm-attaining functionals in the unit sphere $S(X)$ are norm dense in $X^*$. However, in general, a functional $x^*\in X_{1}^*$ that attains its norm need not be a \textit{w}$^*$-\textit{w} PC. In the following note, we focus specifically on the intersection of these two notions, namely, functionals that are both norm-attaining and \textit{w}$^*$-\textit{w} PC. 

In Theorem~\ref{T23}, we established a stability result for \textit{w}$^*$-\textit{w} PCs in the space $C(\Omega, X)$. Using this result, we now derive a necessary condition under which a \textit{w}$^*$-\textit{w} PC in $S(C(\Omega, X)^*)$ attains its norm. Moreover, in this case, we establish an explicit connection between the norm attainment of a functional $\Phi \in C(\Omega, X)^*$ and the norm attainment of elements of $X^*$. This stands in contrast to Theorem~\ref{T34}. For an element $\Phi\in M(\Omega, X^*)$, we define, range$(\Phi)=\{\Phi(B)\in X^*: B \textnormal{ is a Borel subset of } \Omega \}$.
\begin{prop}\label{T35}
    Let $\Phi\in C(\Omega, X)_{1}^*$ be a \textit{w}$^*$-\textit{w} PC. If $\Phi$ attains its norm at  $f\in C(\Omega, X)_{1}$, then there exists a countable set $A=\{x_{i}:i\in \mathbb{N}\}_{}\subseteq X_{1}$ such that each element in range$(\Phi)$ attains its norm on $A$ and $\overline{A}\subseteq X_{1}$ is norm compact.
\end{prop}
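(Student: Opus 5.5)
The plan is to combine the structure theorem for \textit{w}$^*$-\textit{w} PCs (Theorem~\ref{T23}) with the pointwise analysis of norm attainment from the proof of Theorem~\ref{T34}. Here the measure is purely atomic, so the RNP is not needed.

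\textbf{Step 1 (structure and pointwise equalities).} By Theorem~\ref{T23} we write $\Phi=\sum_{i=1}^{\infty}\alpha_i\delta_{\omega_i}\otimes x_i^*$, with $\alpha_i\in(0,1]$, $\sum_i\alpha_i=1$, each $\omega_i$ isolated and each $x_i^*\in X_1^*$ a \textit{w}$^*$-\textit{w} PC. We may take $\|x_i^*\|=1$, since the $x_i^*$ are normalized atoms. Then
\[
1=\Phi(f)=\sum_i\alpha_i x_i^*(f(\omega_i)), \qquad |x_i^*(f(\omega_i))|\le 1 .
\]
The averaging argument of Theorem~\ref{T34}, first for the modulus and then for the real part, gives $x_i^*(f(\omega_i))=1$ for every $i$. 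We put $x_i:=f(\omega_i)\in X_1$ as the first points of $A$.

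\textbf{Step 2 (compactness).} The set $\{f(\omega_i)\}$ lies in $f(\Omega)$, which is norm compact because $f$ is continuous and $\Omega$ is compact. Hence its closure is norm compact and contained in $X_1$. Any further points we add to $A$ must keep this property. The natural choice is to take them from $f(\Omega)$, or from a norm-null perturbation of it, so that $\overline{A}$ stays compact.

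\textbf{Step 3 (every element of range$(\Phi)$).} For a Borel set $B$ we have $\Phi(B)=\sum_{\omega_i\in B}\alpha_i x_i^*$. If $B$ contains a single atom $\omega_i$, then $\Phi(B)=\alpha_i x_i^*$ attains its norm at $x_i$ by Step 1. If $B$ contains no atoms, then $\Phi(B)=0$ and attainment is trivial.

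\textbf{Main obstacle.} The hard case is $B$ containing several atoms. Evaluating at the available points only gives $\Phi(B)(x_j)=\alpha_j+\sum_{i\ne j,\ \omega_i\in B}\alpha_i x_i^*(x_j)$, which need not equal $\|\Phi(B)\|$. Moreover $\|\Phi(B)\|$ can be strictly smaller than $\sum_{\omega_i\in B}\alpha_i$. What I would try first is to exploit that $\Phi$ is \textit{w}$^*$-\textit{w} PC, hence has a unique norm-preserving extension to the bidual (Lemma~\ref{C22}). I would aim to show that each nonzero $\Phi(B)/\|\Phi(B)\|$ is again a \textit{w}$^*$-\textit{w} PC of $X_1^*$, and then produce a norming vector $y_B\in X_1$. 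Since $\{\omega_i\}$ is countable, I would also try to reduce to countably many sets $B$, for example the finite unions of atoms, approximating a general $B$ in norm by its finite truncations. The set $A$ would then be $\{x_i\}\cup\{y_B\}$, with the $y_B$ chosen to converge in norm so that $\overline{A}$ remains compact.

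I expect this step to be the genuine difficulty. Existence of a norming vector for $\Phi(B)$ is not automatic in a non-reflexive $X$. In addition, controlling compactness of $\overline{A}$ while adding the $y_B$ needs an explicit link between the $y_B$ and the values $f(\omega_i)$. If that link cannot be established, the argument would have to be restricted to sets $B$ for which the $x_i^*$ with $\omega_i\in B$ share a common norming point.
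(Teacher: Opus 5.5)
Your Steps 1 and 2 are exactly the paper's proof. You decompose $\Phi$ by Theorem~\ref{T23}, obtain $x_i^*(f(\omega_i))=1$ from $\sum_i\alpha_i=1$, and take $A=\{f(\omega_i)\}\subseteq f(\Omega)$. Your modulus/real-part argument fills in a detail the paper skips in the complex case. The paper stops at this point because it tacitly reads range$(\Phi)$ as $\{x_i^*:i\in\mathbb{N}\}$; it says so explicitly in the proof of Theorem~\ref{T0}. Up to positive scalars these are the atoms $\Phi(\{\omega_i\})=\alpha_i x_i^*$, and for them your single-atom case in Step 3 already finishes the argument.

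The obstacle you isolate for sets $B$ containing several atoms is real, but your plan cannot overcome it. With the literal definition range$(\Phi)=\{\Phi(B):B \text{ Borel}\}$ the statement is false. Let $X=c=C(\mathbb{N}\cup\{\infty\})$, $x_1^*=\sum_{m\ge1}2^{-m}\delta_{2m-1}$ and $x_2^*=-\sum_{m\ge1}2^{-m}\delta_{2m}$.
\begin{itemize}
\item These are unit vectors of $c^*$ supported on isolated points. By the Hu--Smith description recalled in the introduction they are \textit{w}$^*$-PCs of $c^*_1$, and they attain their norms at $\mathbf{1}$ and $-\mathbf{1}$.
\item Take distinct isolated points $\omega_1,\omega_2$ of $\Omega$, say $\Omega=\beta\mathbb{N}$. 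Hu--Smith again shows that $\Phi=\frac12\delta_{\omega_1}\otimes x_1^*+\frac12\delta_{\omega_2}\otimes x_2^*$ is a \textit{w}$^*$-PC, hence a \textit{w}$^*$-\textit{w} PC.
\item $\Phi$ attains its norm at the continuous $f$ equal to $\mathbf{1}$ at $\omega_1$, $-\mathbf{1}$ at $\omega_2$, and $0$ elsewhere.
\item However, $\Phi(\{\omega_1,\omega_2\})=\frac12(x_1^*+x_2^*)$ has norm $1$. A norming $y\in c_1$ would need $y_{2m-1}=1$ and $y_{2m}=-1$ for every $m$, which is not a convergent sequence. So this element of range$(\Phi)$ attains its norm nowhere on $X_1$.
\end{itemize}

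The example also shows where your tools fail.
\begin{itemize}
\item A \textit{w}$^*$-\textit{w} PC need not attain its norm at all. Every point of $S(\ell^1)=S(c_0^*)$ is a \textit{w}$^*$-PC, yet by Lemma~\ref{L31} only the finitely supported ones are norm-attaining.
\item Norm attainment does not pass to norm limits, so truncating $B$ to finite unions of atoms cannot help.
\end{itemize}

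The correct fix is to state the conclusion only for the atoms $\Phi(\{\omega\})$, $\omega\in\Omega$, or equivalently for the $x_i^*$. This is what the paper actually proves and what Theorem~\ref{T0} uses. Under that reading, your Steps 1--2 are a complete proof.
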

\begin{proof}
    Let $\Phi\in C(\Omega, X)_{1}^*$ be a \textit{w}$^*$-\textit{w} PC. From Theorem~\ref{T23}, we have that $\Phi=\sum_{i=1}^{\infty}\alpha_{i}\delta_{k_i}\otimes x_{i}^* \in S(C(\Omega, X)^*)$, where each $\alpha_{i}\in (0,1]$ such that $\sum_{i=1}^{\infty}\alpha_{i}=1$, $k_{i}\in I$ and each $x_{i}^*\in S(X^*)$ is \textit{w}$^*$-\textit{w} PC. Suppose $\Phi$ attains its norm at an element $f\in S(C(\Omega, X))$. Then we have $$\Phi(f)=\sum_{i=1}^{\infty}\alpha_{i}x_{i}^*(f({k_{i}}))=1.$$ Since $\sum_{i=1}^{\infty}\alpha_{i}=1$, it gives that $x_{i}^*(f(k_i))=1$ for each $i\in \mathbb{N}$. Let $A=\{f(k_{i}):i\in \mathbb{N}\}$. Since $f$ is a continuous function and $\overline{A}\subseteq f(\overline{\{k_{i}:i\in\mathbb{N}\}})$, it gives that that $\overline{A}\subseteq X_{1}$ is a compact set.
\end{proof}
\begin{rem}
    In Proposition~\ref{T35}, if $X$ is a finite-dimensional space, then we do not need the additional assumption on $A$, that is, $\overline{A}$ is norm compact.
\end{rem}

In Proposition~\ref{T35}, we established a necessary condition under which a \textit{w}$^*$-\textit{w} PC $\Phi \in C(\Omega, X)^*$ attains its norm on $S(C(\Omega, X))$. However, for a general compact Hausdorff space $\Omega$, it is unclear whether this condition alone is sufficient for the converse. To address this question, consider the case where $S$ is a discrete set and $\beta S$ denotes the Stone-\v{C}ech Compactification of $S$. The following theorem establishes the converse of Proposition~\ref{T35} when $\Omega=\beta S$.

\begin{thm}\label{T0}
    Let $\Phi\in C(\beta S, X)_{1}^*$ be a \textit{w}$^*$-\textit{w} PC. If there exists a countable set $A=\{x_{i}:i\in \mathbb{N}\}_{}\subseteq X_{1}$ such that each element in range$(\Phi)$ attains its norm on $A$ and $\overline{A}\subseteq X_{1}$ is norm compact, then $\Phi$ attains its norm on $S(C(\beta S, X))$.
\end{thm}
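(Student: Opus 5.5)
The plan is to build the norm-attaining function explicitly on the discrete set $S$ and then extend it to $\beta S$ using the universal property of the Stone--\v{C}ech compactification. The norm compactness of $\overline{A}$ is exactly what makes that extension possible.

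\emph{Step 1: the structure of $\Phi$.} Since $\Phi$ is a \textit{w}$^*$-\textit{w} PC, Theorem~\ref{T23} gives
\[
\Phi=\sum_{i=1}^{\infty}\alpha_{i}\delta_{k_i}\otimes x_{i}^*,
\]
with $\alpha_i\in(0,1]$, $\sum_i\alpha_i=1$ and each $k_i$ an isolated point of $\beta S$. By the proof of that theorem, $x_i^*=\Phi(\{k_i\})/\|\Phi(\{k_i\})\|$, so $\|x_i^*\|=1$. The isolated points of $\beta S$ are exactly the points of $S$, so every $k_i\in S$, and the $k_i$ are distinct.

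\emph{Step 2: choosing values in $A$.} For each $i$, the singleton $\{k_i\}$ is a Borel set, so $\Phi(\{k_i\})=\alpha_i x_i^*\in$ range$(\Phi)$. By hypothesis this element attains its norm $\alpha_i$ on $A$. Hence there is $a_i\in A$ with $\alpha_i x_i^*(a_i)=\alpha_i$, that is, $x_i^*(a_i)=1$.

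\emph{Step 3: the function on $S$.} Define $g:S\to X$ by $g(k_i)=a_i$ for each $i$, and $g(s)=x_1$ (any fixed element of $A$) for $s\notin\{k_i:i\in\mathbb{N}\}$. Then $g$ is continuous on the discrete space $S$. Its range lies in $K=\overline{A}$, which is norm compact and Hausdorff.

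\emph{Step 4: extension to $\beta S$.} By the universal property of $\beta S$, $g$ extends to a continuous map $f:\beta S\to K$. In particular $f\in C(\beta S,X)$, and since $K\subseteq X_1$ we have $\|f\|\le 1$. This is the one step where compactness of $\overline{A}$ is genuinely needed. Without it, $g$ would only be a bounded function and would extend continuously to $\beta S$ merely into the weak$^*$-compact ball $X_1^{**}$, not into $X$.

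\emph{Step 5: conclusion.} We compute
\[
\Phi(f)=\sum_{i=1}^{\infty}\alpha_i x_i^*(f(k_i))=\sum_{i=1}^{\infty}\alpha_i x_i^*(a_i)=\sum_{i=1}^{\infty}\alpha_i=1.
\]
Since $\|\Phi\|\le 1$ and $\|f\|\le 1$, this forces $\|f\|=1$. Thus $f\in S(C(\beta S,X))$ and $\Phi$ attains its norm at $f$.

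The remaining points are bookkeeping rather than real obstacles. One is confirming that the $x_i^*$ have norm one, so that ``attains its norm'' yields $x_i^*(a_i)=1$ exactly and not merely $|x_i^*(a_i)|=1$. In the complex case this follows because attainment means the value equals the norm.
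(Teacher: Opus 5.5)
Your proof is correct and follows the paper's argument: you decompose $\Phi$ via Theorem~\ref{T23}, pick norming points in $A$ for the $x_i^*$, define the function on $S$ with values in a norm-compact set, and extend it to $\beta S$ by the Stone--\v{C}ech universal property (the paper puts $0$ off $\{k_i\}$ and uses $\overline{A}\cup\{0\}$ instead of your constant value $x_1$). Your Step~2, which applies the hypothesis to $\Phi(\{k_i\})=\alpha_i x_i^*$, reads range$(\Phi)$ more accurately than the paper's own assertion that range$(\Phi)=\{x_i^*\}$.
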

\begin{proof}
Suppose $\Phi\in C(\beta S, X)_{1}^*$ is a \textit{w}$^*$-\textit{w} PC. By Theorem~\ref{T23}, we have that $\Phi = \sum_{i=1}^{\infty} \alpha_{i} \delta_{k_i} \otimes x_{i}^* \in S(C(\beta S, X)^*)$, where each $\alpha_{i} \in (0,1]$ with $\sum_{i=1}^{\infty} \alpha_{i} = 1$, $k_{i} \in S$, and each $x_{i}^* \in S(X^*)$ is a \textit{w}$^*$-\textit{w} PC. So the range$(\Phi)=\{x_{i}^*:i\in \mathbb{N}\}$. Let $A=\{x_{i}:i\in \mathbb{N}\}_{}\subseteq X_{1}$ be a countable set such that each element in range$(\Phi)$ attains its norm on $A$ and $\overline{A}\subseteq X_{1}$ is norm compact. For each $i\in\mathbb{N}$, there exists a $i_{j}\in\mathbb{N}$ such that $x_{i}^*$ attain its norm at $x_{i_{j}}$. So we have $x_{i}^*(x_{i_{j}})=1$ for each $i\in \mathbb{N}$. Define $f:S\rightarrow X_{1}$ by 
\[
    f(k)=
    \begin{cases}
        x_{i_{j}}, & \text{if } k=k_{i} \text{ for some } i\in \mathbb{N},\\
0, & \text{ otherwise}.
    \end{cases}
    \]
Since $S$ is a discrete set and range$(f)\subseteq \overline{A}\cup \{0\}$, we have that $f$ is a continuous function which takes values in a compact set. By the Dugundji's theorem (see \cite[Theorem~8.2]{Dug}), there exists a unique continuous function, say, $\tilde{f}: \beta S\rightarrow X_{1}$ such that $\tilde{f}|_{S}=f$. Now we have that 

\begin{align*}
\Phi(\tilde{f})&=\sum_{i=1}^{\infty}\alpha_{i}x_{i}^*(\tilde{f}({k_{i}}))\\
&=\sum_{i=1}^{\infty}\alpha_{i}x_{i}^*(f({k_{i}}))\\
&=\sum_{i=1}^{\infty}\alpha_{i}x_{i}^*(x_{i_{j}})\\
&=\sum_{i=1}^{\infty}\alpha_{i}\\
&=1.
\end{align*}
Thus, the measure $\Phi$ attains its norm at $\tilde{f}$. 
\end{proof}
Suppose $I\subset \Omega$ is the set of all isolated points, forming a discrete set. Using an argument similar to the one in the theorem above for $S=I$, we obtain a continuous function $\tilde{f}:\beta I\rightarrow X_{1}$. While the set $\beta I$ may not always be homeomorphic to a subset of $\Omega$, which is a key requirement to establish Theorem~\ref{T0}, the following proposition provides a situation in which such an extension is always possible.

\begin{prop}
    Let $\Omega$ be such that there exists a homeomorphism $\phi$ from $\beta I$ into $\Omega$ such that $\phi(k)=k$ for each $k\in I$. Let $\Phi\in C(\Omega, X)_{1}^*$ be a \textit{w}$^*$-\textit{w} PC. If there exists a countable set $A=\{x_{i}:i\in \mathbb{N}\}_{}\subseteq X_{1}$ such that each element in range$(\Phi)$ attains its norm on $A$ and $\overline{A}\subseteq X_{1}$ is norm compact, then $\Phi$ attains its norm.
\end{prop}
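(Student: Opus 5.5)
The plan is to repeat the construction of Theorem~\ref{T0} on $\beta I$, move the resulting function into $\Omega$ through the homeomorphism $\phi$, and then extend it from the compact set $\phi(\beta I)$ to all of $\Omega$ without leaving $X_{1}$.

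\emph{Set-up and the function on $\beta I$.} By Theorem~\ref{T23}, $\Phi=\sum_{i=1}^{\infty}\alpha_{i}\delta_{k_i}\otimes x_{i}^*$. Here $\alpha_i\in(0,1]$ with $\sum_{i}\alpha_i=1$, each $k_i\in I$, and each $x_i^*\in S(X^*)$. Hence range$(\Phi)$ contains every $x_i^*$, since $x_i^*=\Phi(\{k_i\})/\alpha_i$. By hypothesis each $x_i^*$ attains its norm at some $x_{i_j}\in A$, so $x_i^*(x_{i_j})=1$. Exactly as in Theorem~\ref{T0}, define $f:I\to X_1$ by $f(k_i)=x_{i_j}$ and $f(k)=0$ for every other $k\in I$. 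Then $f$ is continuous, because $I$ is discrete, and its range lies in the compact set $\overline{A}\cup\{0\}$. By the universal property of the Stone--\v{C}ech compactification (the paper cites \cite[Theorem~8.2]{Dug}), $f$ extends to a continuous $\tilde f:\beta I\to \overline{A}\cup\{0\}\subseteq X_1$.

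\emph{Transfer to $\Omega$.} Put $K=\phi(\beta I)$, which is a compact subset of $\Omega$. Define $g=\tilde f\circ\phi^{-1}:K\to X_1$. This is continuous, its range lies in $\overline{A}\cup\{0\}$, and $g(k_i)=f(k_i)=x_{i_j}$ because $\phi(k_i)=k_i$.

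\emph{Extension to all of $\Omega$ (main obstacle).} I need a continuous $G:\Omega\to X_1$ with $G|_K=g$. Dugundji's extension theorem does not apply directly, since $\Omega$ need not be metrizable. I would use instead that $g$ has norm-compact range.
\begin{enumerate}
\item By compactness of the range, write $g=\sum_{n\ge1}g_n$ uniformly on $K$, where each $g_n:K\to E_n$ is continuous, $E_n$ is a finite-dimensional subspace of $X$, and $\|g_n\|\le 2^{-n}$ for $n\ge2$. To do this, approximate $g$ within $2^{-n}$ by maps of the form $\sum_m \psi_m(\cdot)\,y_m$, with $\{\psi_m\}$ a partition of unity subordinate to a finite cover of the range and the $y_m$ points of the range, and then take successive differences.
\item Extend each $g_n$ to $\Omega$ coordinatewise with Tietze's theorem, and cut the extension back to norm at most $C_n\|g_n\|$ by composing with a radial truncation in $E_n$. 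The constant $C_n$ is uniform for this purpose: radial truncation in a normed space is $2$-Lipschitz, so one may take $C_n=2$, which keeps the series summable.
\item Summing the extensions gives a continuous $G_0:\Omega\to X$ with $G_0|_K=g$.
\item Set $G=r\circ G_0$, where $r(x)=x/\max(1,\|x\|)$ is the continuous radial retraction of $X$ onto $X_1$. Since $g$ already takes values in $X_1$, we still have $G|_K=g$.
\end{enumerate}
This extension step is the one that needs care; everything else is the same as in Theorem~\ref{T0}.

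\emph{Conclusion.} We have $G\in C(\Omega,X)_1$ and $G(k_i)=x_{i_j}$ for every $i$. Therefore
\[\Phi(G)=\sum_{i}\alpha_i x_i^*(x_{i_j})=\sum_{i}\alpha_i=1=\|\Phi\|.\]
Moreover $\|G\|=1$, since $\|G\|\le 1$ and $1=|\Phi(G)|\le\|G\|$. So $\Phi$ attains its norm on $S(C(\Omega,X))$.
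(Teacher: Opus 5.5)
Your proof is correct and follows the paper's route: decomposition via Theorem~\ref{T23}, the function $f$ on $I$ extended to $\beta I$, transport through $\phi$, extension to all of $\Omega$ with values in $X_1$, and $\Phi(G)=\sum_i\alpha_i=1$. The only difference is that you prove the vector-valued Tietze step by hand (finite-rank approximation, scalar Tietze, radial truncation and retraction), where the paper simply cites it. There are two harmless slips. First, range$(\Phi)$ contains $\alpha_i x_i^*$ rather than $x_i^*$, which does not matter because norm attainment is invariant under positive scaling. Second, truncating the extension of $g_n$ to the ball of radius $\sup_{K}\|g_n\|$ already gives $C_n=1$ while leaving $g_n$ unchanged on $K$, so the Lipschitz remark is unnecessary.
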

\begin{proof}
   Since $\Phi\in C(\Omega, X)_{1}^*$ is a \textit{w}$^*$-\textit{w} PC. Again, from Theorem~\ref{T23}, we have that $\Phi=\sum_{i=1}^{\infty}\alpha_{i}\delta_{k_i}\otimes x_{i}^* \in S(C(\Omega, X)^*)$, where each $\alpha_{i}\in (0,1]$ such that $\sum_{i=1}^{\infty}\alpha_{i}=1$, $k_{i}\in I$ and each $x_{i}^*\in S(X^*)$ is \textit{w}$^*$-\textit{w} PC. Let $A=\{x_{i}:i\in \mathbb{N}\}_{}\subseteq X_{1}$ satisfies the hypothesis of the theorem and let for each $i\in \mathbb{N}$, the point $x_{i_{j}}\in X_{1}$ be as in the proof of the above theorem. By using an argument similar to the one used in the theorem above for $S=I$, we obtain a continuous function $f:I\rightarrow X_{1}$ such that 
   \[
    f(k)=
    \begin{cases}
        x_{i_{j}}, & \text{if } k=k_{i} \text{ for some } i\in \mathbb{N},\\
0, & \text{ otherwise}.
    \end{cases}
    \]
    Suppose $\tilde{f}:\beta I\rightarrow X_{1}$ is an extension of $f$ as in the proof of the above theorem. By the hypothesis, let $\phi: \beta I\rightarrow \Omega$ be an into homeomorphism with $\phi(k)=k$ for each $k\in \mathbb{N}$, and hence, $\phi(k_{i})=k_{i}$ for each $i\in\mathbb{N}$. Now we consider the composition $\tilde{f}\circ \phi^{-1}:\phi(\beta I)\rightarrow X_{1}$. Since $\phi$ is a homeomorphism from $\beta I$ onto $\phi(\beta I)$, it gives that $\phi(\beta I)\subseteq \Omega$ is compact and the composition $\tilde{f}\circ \phi^{-1}$ is continuous. By using the vector-valued version of Tietze extension theorem (see \cite[Theorem~3.1]{FR}), we get a continuous map $g: \Omega \rightarrow X_{1}$ such that $\|g\|=\|\tilde{f}\|$ and $g|_{\beta I}=\tilde{f}|_{\beta I}$. Consequently, we have $$g(k_{i})=\tilde{f}(k_{i})=f(k_{i})=x_{i_{j}}$$ for each $i\in \mathbb{N}$. It is easy to see that the measure $\Phi$ attains its norm at $g$. This completes the proof.
\end{proof}

\section{More on w$^*$-w PCs and norm attainment}
In this section, we analyze the structure of \textit{w}$^*$-\textit{w} PCs in the dual unit ball $\textnormal{WC}(\Omega, X)_{1}^*$. As we have seen in the proof of Theorem~\ref{T23}, the functionals of the form $\delta_{\omega}\otimes x^*$ play a central role in characterizing the structure of \textit{w}$^*$-\textit{w} PCs. As shown in \cite[corollary~5]{DR}, we have the inclusions $C(\Omega, X)\subseteq \textnormal{WC}(\Omega, X)\subseteq C(\Omega, X)^{**}$, where $C(\Omega, X)\subseteq C(\Omega, X)^{**}$ is the canonical embedding. We now combine these observations to establish the structure of \textit{w}$^*$-\textit{w} PCs in the dual unit ball $\textnormal{WC}(\Omega, X)^*_{1}$.  
\begin{thm}\label{T41}
    Let $\Lambda\in\textnormal{WC}(\Omega,X)^*_{1}$ be a \textit{w}$^*$-\textit{w} PC. Then $\Lambda=\sum_{i=1}^{\infty}\alpha_{i}\delta_{\omega_{i}}\otimes x_{{i}}^*$, where $\alpha_{i}\in(0,1]$ such that $\sum_{i=1}^{\infty}\alpha_{i}=1$, $x_{{i}}^*\in X_{1}^*$ is \textit{w}$^*$-\textit{w} PC for each $i\in \mathbb{N}$ and each $\omega_{i}\in \Omega$ is an isolated point.
\end{thm}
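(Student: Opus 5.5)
I will follow the proof of Theorem~\ref{T23}, now working in the dual unit ball $\textnormal{WC}(\Omega,X)^*_{1}$. The plan is to first show that $\Lambda$ is a countable norm-convergent combination of point evaluations, and then perturb one point mass at a time to show that each point is isolated and each coefficient functional is a \textit{w}$^*$-\textit{w} PC.

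\emph{Step 1 (extreme points).} I first identify the relevant extreme points. For $\omega\in\Omega$ and $x^*\in X^*$, the functional $\delta_\omega\otimes x^*$, given by $f\mapsto x^*(f(\omega))$, is bounded on $\textnormal{WC}(\Omega,X)$ with norm $\|x^*\|$. The family $\{\delta_\omega\otimes x^*:\omega\in\Omega,\ x^*\in \text{ext}(X^*_1)\}$ is norming for $\textnormal{WC}(\Omega,X)$, since $\|f\|=\sup_\omega\|f(\omega)\|$. Hence, by the Hahn--Banach separation theorem, the \textit{w}$^*$-closed convex hull of this family is the whole of $\textnormal{WC}(\Omega,X)^*_1$. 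This substitutes for \cite{RS} and avoids having to compute $\text{ext}(\textnormal{WC}(\Omega,X)^*_1)$ exactly.

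\emph{Step 2 (countable support).} Since $\Lambda$ is a \textit{w}$^*$-\textit{w} PC, I will use the standard fact (as in the proof of Theorem~\ref{T23}) that $\Lambda$ then lies in the weak closure, hence the norm closure, of $\text{co}\{\delta_\omega\otimes x^*\}$. A norm limit of finitely supported combinations can be written as $\Lambda=\sum_i \beta_i\,\delta_{\omega_i}\otimes y_i^*$ with $\sum_i\|y_i^*\|$ finite and distinct points $\omega_i$. To justify this:
\begin{itemize}
\item the map sending finitely supported sums $\sum_{\omega}\delta_\omega\otimes y_\omega^*$ to the element $(y_\omega^*)_\omega$ of $\ell^1(\Omega,X^*)$ is an isometry;
\item this holds because one can test against functions built from finitely many $\omega$'s, using $C(\Omega,X)\subseteq \textnormal{WC}(\Omega,X)$ together with Urysohn functions $\chi$-approximations to separate distinct points;
\item consequently norm-Cauchy sequences of such sums converge to $\ell^1$-type sums.
\end{itemize}
By \cite[Lemma 2.4]{SMR}, $\|\Lambda\|=1$. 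Normalizing each coefficient then gives $\Lambda=\sum_i\alpha_i\delta_{\omega_i}\otimes x_i^*$ with $\sum_i\alpha_i=1$ and $\|x_i^*\|=1$.

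\emph{Step 3 (isolated points and PC coordinates).} Fix $i_0$ and suppose $\omega_{i_0}$ is not isolated. I take a net $\omega_\alpha\to\omega_{i_0}$ of distinct points and set
$$\Lambda_\alpha=\sum_{j\ne i_0}\alpha_j\delta_{\omega_j}\otimes x_j^*+\alpha_{i_0}\delta_{\omega_\alpha}\otimes x_{i_0}^*.$$
Each $\Lambda_\alpha$ lies in the unit ball. Moreover $\Lambda_\alpha\to\Lambda$ weak$^*$, because every $f\in\textnormal{WC}(\Omega,X)$ is weakly continuous, so $x_{i_0}^*(f(\omega_\alpha))\to x_{i_0}^*(f(\omega_{i_0}))$. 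By hypothesis, $\Lambda_\alpha\to\Lambda$ weakly. To reach a contradiction, I test against the element of $\textnormal{WC}(\Omega,X)^{**}$ given by $\Psi\mapsto \Psi$-mass at $\{\omega_{i_0}\}$ evaluated at a vector $x$ with $x_{i_0}^*(x)\neq0$. This element is bounded on the closed subspace spanned by countably atomic functionals, and I extend it by Hahn--Banach. It gives $\alpha_{i_0}x_{i_0}^*(x)$ on $\Lambda$ and $0$ on $\Lambda_\alpha$ for $\alpha$ large (once $\omega_\alpha$ avoids $\omega_{i_0}$), which is a contradiction.

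For the PC property of $x_{i_0}^*$, I take a net $y_\beta^*\to x_{i_0}^*$ weak$^*$ in $X^*_1$ and replace $x_{i_0}^*$ by $y_\beta^*$ in $\Lambda$. The resulting functionals converge weak$^*$, hence weakly. Composing with the bounded map $\Psi\mapsto$ (coefficient of $\Psi$ at $\omega_{i_0}$) on the atomic subspace, and extending functionals of $X^{**}$ through this map, yields $y_\beta^*\to x_{i_0}^*$ weakly.

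\emph{Main obstacle.} I expect the main obstacle to be rigorously justifying Step 2 and the coefficient-extraction functional, since the dual of $\textnormal{WC}(\Omega,X)$ has no convenient measure representation. I would handle both via the closed subspace $\ell^1(\Omega,X^*)$ of atomic functionals, checking isometry on it, and then use Hahn--Banach to extend coordinate functionals from this subspace to the whole dual.
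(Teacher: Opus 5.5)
Your proof is correct. Its skeleton is the same as the paper's: a norming family of point functionals, then the weak$^*$-closed convex hull being the ball, then $\Lambda$ lying in the norm-closed convex hull, then a countable atomic form, then perturbing one atom. The difference is in how the ``atoms'' inside $\textnormal{WC}(\Omega,X)^*$ are made rigorous, since that dual has no measure representation.

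\textbf{The paper's route.} It invokes the norm-one projection $P$ of \cite{DR}. This projection has kernel $C(\Omega,X)^{\perp}$, its range is isometric to $C(\Omega,X)^*=M(\Omega,X^*)$, and it fixes each $\delta_\omega\otimes x^*$. So $\Lambda$, lying in the norm-closed convex hull of these point functionals, sits in a copy of $M(\Omega,X^*)$. The support argument and the test element $x\otimes\chi_{\omega_{i_0}}$ from Theorem~\ref{T23} can then be reused, which the paper does in one line (``similar argument'').

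\textbf{Your route.} You show directly, using Urysohn functions in $C(\Omega,X)\subseteq\textnormal{WC}(\Omega,X)$, that $(y^*_\omega)\mapsto\sum_\omega\delta_\omega\otimes y^*_\omega$ is an isometric embedding of $\ell^1(\Omega,X^*)$. Its range is closed and contains that hull. You then obtain the needed bidual elements by Hahn--Banach extension of the coordinate functionals $\Psi\mapsto x^{**}(y^*_{\omega_{i_0}})$ from this range.

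\textbf{Comparison.} Your argument is self-contained and avoids \cite{DR}, in particular the Bochner integrability of weakly continuous functions. It also spells out exactly the steps the paper leaves implicit: why the norm closure consists of countably supported sums, and which element of $\textnormal{WC}(\Omega,X)^{**}$ separates $\Lambda_\alpha$ from $\Lambda$. The paper's version is shorter on the page only because it recycles Theorem~\ref{T23}.

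A lighter substitute for your Hahn--Banach step is available. The restriction map $\textnormal{WC}(\Omega,X)^*\to C(\Omega,X)^*=M(\Omega,X^*)$ is a contraction. So $\Psi\mapsto x^{**}\bigl((\Psi|_{C(\Omega,X)})(\{\omega_{i_0}\})\bigr)$ is already a bounded functional on $\textnormal{WC}(\Omega,X)^*$, and it takes the correct values on $\Lambda$ and on the perturbed functionals.
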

\begin{proof}
    Let $\Lambda$ is \textit{w}$^*$-\textit{w} PC. In \cite{DR}, a projection is described as follows. $$P:\textnormal{WC}(\Omega,X)^*\to\textnormal{WC}(\Omega,X)^*$$ 

    For any $\mu\in \textnormal{WC}(\Omega,X)^*$, consider the measure $\nu=\mu|_{C(\Omega, X)}$. From \cite[Proposition~4]{DR}, we have that each $f\in\textnormal{WC}(\Omega,X)$ is Bochner $\nu$-integrable. Define
    $$
    P(\mu)(f)=\int{fd\nu}       \text{    for } f\in \textnormal{WC}(\Omega,X).
    $$

    Then $P$ is of norm one projection and Ker$(P)=C(\Omega, X)^{\perp}$. It is easy to see that range$(P)$ is isometric to $C(\Omega, X)^*$ and
$$
P(\delta_{\omega}\otimes x^*)(f)=x^*(f(\omega))=(\delta_{\omega}\otimes x^*)(f) \text{ for any } \omega\in\Omega, x^*\in X^*.
$$

If we denote $A=\{\delta_{\omega}\otimes x^*: \omega\in \Omega, x^*\in X^*\}$, then $\|f\|=\text{sup}\{x^*(f(\omega)):x^*\in X_{1}^* \text{ and } \omega\in \Omega\}$ for all $f\in \textnormal{WC}(\Omega,X)$. This implies that the set $A$ determines the norm on $ \textnormal{WC}(\Omega, X)$. By an application of the Hahn-Banach separation theorem, we have that $\overline{co}^{{\textit{w}}^*}(A)= \textnormal{WC}(\Omega, X)^*_{1}$. Since $\Lambda$ is a \textit{w}$^*$-\textit{w} PC, we have that $\Lambda\in \overline{co}^{w}(A)=\overline{co}^{\|.\|}(A)$. By using similar argument as given in the proof of Theorem~\ref{T23}, we get that $\Lambda=\sum_{i=1}^{\infty}\alpha_{i}\delta_{\omega_{i}}\otimes x_{{i}}^*$, where $\alpha_{i}\in(0,1]$ are such that $\sum_{i=1}^{\infty}\alpha_{i}=1$, $x_{{i}}^*\in X_{1}^*$ is \textit{w}$^*$-\textit{w} PC for each $i\in \mathbb{N}$ and each $\omega_{i}\in \Omega$ is an isolated point. This completes the proof.
\end{proof}

For a general compact Hausdorff space $\Omega$, the converse of Theorem~\ref{T41} may not be true. If $\Omega$ is an extremally disconnected space, one can use an argument similar to the one used in Theorem~\ref{T24} to obtain the converse. We provide these details for completeness.
\begin{thm}
    Let $\Omega$ be an extremally disconnected space and let $\Lambda \in \textnormal{WC}(\Omega, X)_{1}^*$ be such that $\Lambda=\sum_{i=1}^{\infty}\alpha_{i}\delta_{\omega_{i}}\otimes x_{{i}}^*$, where $\alpha_{i}\in(0,1]$ such that $\sum_{i=1}^{\infty}\alpha_{i}=1$, $x_{{i}}^*\in X_{1}^*$ is \textit{w}$^*$-\textit{w} PC for each $i\in \mathbb{N}$ and each $\omega_{i}\in \Omega$ is an isolated point. Then $\Lambda$ is a \textit{w}$^*$-\textit{w} PC.
\end{thm}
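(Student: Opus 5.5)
The plan is to follow the proof of Theorem~\ref{T24}, with $C(\Omega,X)$ replaced by $\textnormal{WC}(\Omega,X)$, and reduce everything to a uniqueness-of-extension statement on $c_{0}(I,X)$.

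\textbf{Case 1: $I$ dense in $\Omega$.} First we check the chain of isometric inclusions
$$c_{0}(I,X)\subseteq C(\Omega,X)\subseteq \textnormal{WC}(\Omega,X)\subseteq \ell^{\infty}(I,X).$$
\begin{itemize}
\item The first inclusion is exactly the one in the proof of Theorem~\ref{T24}: extend by zero off $I$.
\item The second is trivial.
\item The last is given by restriction to $I$. It is isometric because $I$ is dense and $\omega\mapsto\|f(\omega)\|$ is weakly lower semicontinuous for weakly continuous $f$. Indeed, $\|f(\omega)\|=\sup_{x^*\in X_1^*}|x^*(f(\omega))|$ is a supremum of continuous functions, so $\sup_{\Omega}\|f\|=\sup_I\|f\|$.
\end{itemize}
Since $c_0(I,X)$ is an M-ideal in $\ell^\infty(I,X)$, the Fact (\cite[proposition~1.17]{HWW}) shows that $c_0(I,X)$ is an M-ideal in $\textnormal{WC}(\Omega,X)$. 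Every $\omega_i$ lies in $I$, so $\Lambda$ is carried by $c_0(I,X)$. Precisely, $\Lambda$ restricted to $c_0(I,X)$ is the norm-one functional $\sum\alpha_i e_{\omega_i}x_i^*\in\ell^1(I,X^*)$. Under the decomposition $\textnormal{WC}(\Omega,X)^*=c_0(I,X)^{\perp}\oplus_1 c_0(I,X)^*$, $\Lambda$ is the canonical norm-preserving extension of this functional, because $\Lambda(f)=\sum\alpha_i x_i^*(f(\omega_i))$ only sees the values on $I$. So by Lemma~\ref{L21} it suffices to show that $\Lambda$ is a \textit{w}$^*$-\textit{w} PC of $c_0(I,X)^*_1$.

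That step is word for word the argument of Theorem~\ref{T24}. Take a norm-preserving extension $\Lambda'\in c_0(I,X)^{***}=c_0(I,X^{**})^{\perp}\oplus_1\ell^1(I,X^{***})$ and write $\Lambda'=\phi_1+\phi_2$. The coordinates satisfy $\phi_2(\omega_i)|_X=\alpha_ix_i^*$, and the norm count forces $\phi_1=0$ and $\|\phi_2(\omega_i)\|=\alpha_i$. Lemma~\ref{C22} applied to each $x_i^*$ then gives $\phi_2(\omega_i)=\alpha_i J_{X^*}(x_i^*)$. Hence $\Lambda'=\hat\Lambda$, and Lemma~\ref{C22} applied to $c_0(I,X)$ finishes this case.

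\textbf{Case 2: $I$ not dense.} Since $\Omega$ is extremally disconnected, $\overline I$ is clopen. A function on $\Omega$ is weakly continuous iff its restrictions to the two clopen pieces are, so
$$\textnormal{WC}(\Omega,X)=\textnormal{WC}(\overline I,X)\oplus_\infty\textnormal{WC}(\overline I^{\,c},X).$$
The summand $\textnormal{WC}(\overline I,X)$ is therefore an M-summand, hence an M-ideal. $\Lambda$ is supported in $\overline I$, and $\overline I$ is again a compact space in which the isolated points are dense (isolated points of $\Omega$ lying in the open set $\overline I$ remain isolated). Case~1 applied to $\textnormal{WC}(\overline I,X)$ shows that $\Lambda$ is a \textit{w}$^*$-\textit{w} PC there, and Lemma~\ref{L21} transfers this to $\textnormal{WC}(\Omega,X)^*_1$. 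Case~1 itself does not use extremal disconnectedness; that hypothesis is only needed in Case~2, to make $\overline I$ clopen.

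\textbf{Main obstacle.} The delicate point is the embedding $\textnormal{WC}(\Omega,X)\subseteq\ell^\infty(I,X)$ via restriction. Isometry needs the lower semicontinuity argument above. One must also make sure that the M-ideal structure and the identification of $\Lambda$ with its restriction are compatible: $\Lambda$ must be the element of $c_0(I,X)^*$, viewed in $\textnormal{WC}(\Omega,X)^*$ through the M-ideal decomposition, that Lemma~\ref{L21} refers to. I would verify this by noting that $\Lambda$ is the unique Hahn--Banach extension that annihilates the L-complement $c_0(I,X)^{\perp}$. This holds because $\|\Lambda\|=\sum\alpha_i=\|\Lambda|_{c_0(I,X)}\|$, and in an L-decomposition an element whose norm equals the norm of its component must have zero $\perp$-part.
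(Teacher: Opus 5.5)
Your proposal is correct and follows the paper's proof essentially step for step. In the dense case it uses the chain $c_0(I,X)\subseteq C(\Omega,X)\subseteq \textnormal{WC}(\Omega,X)\subseteq \ell^\infty(I,X)$, the M-ideal transfer, the uniqueness-of-extension argument on $c_0(I,X)^{***}$ from Theorem~\ref{T24}, and Lemma~\ref{L21}; otherwise it uses the clopen M-summand reduction. Your added checks fill in details the paper leaves implicit: the isometry of restriction via lower semicontinuity of $\omega\mapsto\|f(\omega)\|$, and that $\Lambda$ lies in the $c_0(I,X)^*$ summand because $\|\Lambda\|=\|\Lambda|_{c_0(I,X)}\|$ (a step that, as in the paper, tacitly uses $\|x_i^*\|=1$).
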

\begin{proof}
    We first assume that $I$ is dense in $\Omega$. Let $\Lambda=\sum_{i=1}^{\infty}\alpha_{i}\delta_{\omega_{i}}\otimes x_{{i}}^*$, where $\alpha_{i},\omega_{i} \text{ and } x_{i}^*$ satisfy the hypothesis of the theorem~\ref{T41}. As we have already shown in Theorem~\ref{T23} that $c_{0}(I,X)\subseteq C(\Omega, X)$ and it easy to see that $C(\Omega, X)\subseteq  \textnormal{WC}(\Omega,X)\subseteq l^{\infty}(I,X)$. Since $c_{0}(I,X)\subseteq l^{\infty}(I,X)$ is a M-ideal, we have $c_{0}(I,X)\subseteq  \textnormal{WC}(\Omega,X)$ is a M-ideal. Clearly, $\Lambda\in c_{0}(I,X)^*_{1}$. By using the similar argument as in the proof of Theorem~\ref{T23}, we have $\Lambda\in c_{0}(I,X)^*_{1}$ is \textit{w}$^*$-\textit{w} PC. Hence, the conclusion follows from Lemma~\ref{L21}.

    If $I$ is not dense in $\Omega$, since $\Omega$ is an extremally disconnected space, it follows that $\overline{I}$ is clopen. So we can write $\textnormal{WC}(\Omega, X)=WC(\overline{I},X)\oplus_{\infty}WC(\overline{I}^c,X)$. Now we may assume $\Lambda\in WC(\overline{I},X)_{1}^*$. By applying part (1) on the space $WC(\overline{I},X)$ gives that $\Lambda\in WC(\overline{I},X)_{1}^*$ is a \textit{w}$^*$-\textit{w} PC. Since an M-summand is also an M-ideal, the conclusion follows from Lemma~\ref{L21}.
\end{proof}

\begin{rem}
In Theorem~\ref{T0}, if we replace the norm compactness of $\overline{A}$ by weak compactness, then the same conclusion holds for the space $\textnormal{WC}(\Omega, X)$.    
\end{rem}
The following lemma is a well-known result that characterizes norm-attaining functionals in the space $c_{0}(X)$. We give a proof for the sake of completeness.
\begin{lem}\label{L31}
    Let $(x_{i}^*)\in S(c_{0}(X)^*)$ and $\Lambda=\{i\in \mathbb{N}:x_{i}^*=0\}$. Then $(x_{i}^*)$ attains its norm on $S(c_{0}(X))$ if and only if $\Lambda^c$ is a finite set.
\end{lem}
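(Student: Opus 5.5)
We use the standard identification $c_{0}(X)^*=\ell^{1}(\mathbb{N},X^*)$, with pairing $\langle (x_i^*),(x_i)\rangle=\sum_i x_i^*(x_i)$ and norm $\sum_i\|x_i^*\|=1$. Both directions then reduce to comparing this series with $\sum_i\|x_i^*\|$. The hypothesis $\Lambda^c$ finite means that only finitely many coordinates are nonzero. We also need that each nonzero $x_i^*$ attains its norm on $X_1$; this holds automatically if $X$ is finite-dimensional or reflexive. In general the correct statement must include it, so we interpret the lemma as: $(x_i^*)$ attains its norm if and only if $\Lambda^c$ is finite and each $x_i^*/\|x_i^*\|$, $i\in\Lambda^c$, is norm-attaining on $X_1$. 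We prove it in that form, and when $X$ is finite-dimensional or reflexive the extra condition is automatic.

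\textbf{Sufficiency.} Suppose $\Lambda^c=\{i_1,\dots,i_m\}$. For each $i\in\Lambda^c$ choose $x_i\in S(X)$ with $x_i^*(x_i)=\|x_i^*\|$. Put $x_i=0$ for $i\in\Lambda$. The sequence $(x_i)$ is finitely supported, so it lies in $c_0(X)$, and it has norm $1$ because $\Lambda^c\neq\emptyset$ (indeed $\|(x_i^*)\|=1$). Then
\[
\sum_i x_i^*(x_i)=\sum_{i\in\Lambda^c}\|x_i^*\|=1,
\]
so $(x_i^*)$ attains its norm at $(x_i)$.

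\textbf{Necessity.} Suppose $(x_i)\in S(c_0(X))$ satisfies $\sum_i x_i^*(x_i)=1$. We have
\[
1=\Big|\sum_i x_i^*(x_i)\Big|\le \sum_i\|x_i^*\|\,\|x_i\|\le\sum_i\|x_i^*\|=1,
\]
so every inequality is an equality. This gives $\sum_i\|x_i^*\|(1-\|x_i\|)=0$, hence $\|x_i\|=1$ whenever $x_i^*\neq0$. It also gives $x_i^*(x_i)=\|x_i^*\|$ for those $i$, by the same real-part argument used in the proof of Theorem~\ref{T34}. Since $\|x_i\|\to0$, only finitely many indices can have $\|x_i\|=1$, so $\Lambda^c$ is finite. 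The same computation shows that each nonzero $x_i^*$ attains its norm at $x_i$.

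\textbf{Main obstacle.} The only delicate step is the equality case in the triangle inequality, in particular the complex case. We handle it exactly as in Theorem~\ref{T34}: equality forces both $|x_i^*(x_i)|=\|x_i^*\|$ and $\operatorname{Re}x_i^*(x_i)=|x_i^*(x_i)|$ termwise, because a sum of terms each bounded by a nonnegative quantity can equal the sum of those quantities only if each term equals its bound.
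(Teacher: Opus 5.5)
Your argument is correct and is the paper's own. Necessity comes from termwise equality in $\bigl|\sum_i x_i^*(x_i)\bigr|\le\sum_i\|x_i^*\|\,\|x_i\|\le\sum_i\|x_i^*\|$ together with $\|x_i\|\to 0$. Sufficiency comes from choosing norming vectors coordinatewise on the finite set $\Lambda^c$.

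You are also right to amend the statement. The paper's converse silently picks $x_i\in X_1$ with $x_i^*(x_i)=\|x_i^*\|$, which requires each nonzero $x_i^*$ to attain its norm, so the lemma as stated fails for non-reflexive $X$. For example, take $X=c_0$, $x_1^*=(2^{-n})_{n\ge 1}\in\ell^1$ and $x_i^*=0$ for $i\ge 2$: then $\Lambda^c=\{1\}$ is finite, but the functional never attains its norm. Your version, which reduces to the paper's when $X$ is reflexive, is the correct one.
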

\begin{proof}
    The functional $(x_{i}^*)$ attains its norm if $(x_{i}^*)((x_{i}))=1$ for some $(x_{i})\in S(c_{0}(X))$. This implies,
    $$
         \sum_{i=1}^{\infty}x_{i}^*(x_{i})=1\leq\sum_{i=1}^{\infty}\|x_{i}^*\|\|x_{i}\|\leq\sum_{i=1}^{\infty}\|x_{i}^*\|\\.
    $$
    Since $\sum_{i=1}^{\infty}\|x_{i}^*\|=1$, we have $\|x_{i}^*\|\|x_{i}\|=\|x_{i}^*\|$ for each $i\in \mathbb{N}$. Suppose $\Lambda^c$ is an infinite set. Then we have $\|x_{i}\|=1$ for infinitely many $i\in \mathbb{N}$, which is a contradiction as $\|x_{i}\|\to 0$. Conversely, suppose $\Lambda^c$ is a finite set. For $i\in \Lambda^c$, we choose $x_{i}\in X_{1}$ such that $x_{i}^*(x_{i})=\|x_{i}^*\|$ and $x_{i}=0$ for each $i\in \Lambda$. It is easy to see that $(x_{i})\in S(c_{0}(X))$ and  $(x_{i}^*)$ attains its norm at $(x_{i})$.
\end{proof}
Recall that the space $c$ can be identified as $C(N_{+})$ where, $N_{+}$ is the one point compactification of $N$ (i.e, $N_{+}=N\cup\{\infty\})$. We use this identification to apply Theorem~\ref{T32} on the space $c$. In the next lemma, we extend the scalar-valued version of Lemma~\ref{L31} to the space $c$. 
\begin{prop}
    Let $(\alpha_{i})\in S(c^*)$ and $\Lambda=\{i\in \mathbb{N}:\alpha_{i}=0\}$. Then $(\alpha_{i})$ attains its norm on $S(c)$ if and only if either $\Lambda^c$ is finite or $\left(\frac{\overline{\alpha_{i_{k}}}}{|\alpha_{i_{k}}|}\right)_{k\in \mathbb{N}}$ is convergent, where $\alpha_{i_{k}}$ is the $k^{th}$ non zero entry of $(\alpha_{i})$.
\end{prop}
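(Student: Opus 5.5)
The plan is to argue directly from the duality $c^*\cong \ell^1$ used in the statement, with the functional acting by $x\mapsto \sum_i \alpha_i x_i$ for $x=(x_i)\in c$, and $\|(\alpha_i)\|=\sum_i|\alpha_i|=1$. Equivalently, through the identification $c=C(N_+)$, one can view this as Theorem~\ref{T32} with $\mu=\sum_i|\alpha_i|\delta_i$ and $h(i)=\alpha_i/|\alpha_i|$ on $\Lambda^c$. Theorem~\ref{T32} says the functional attains its norm exactly when the density can be chosen continuous on $N_+$ on the support of $\mu$. I will nevertheless carry out the elementary argument, in the spirit of Lemma~\ref{L31}, since it is self-contained.

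\emph{Necessity.} Suppose $x=(x_i)\in S(c)$ satisfies $\sum_i\alpha_i x_i=1$. Then
$$1=\sum_i \alpha_i x_i\le \sum_i|\alpha_i||x_i|\le \sum_i|\alpha_i|=1.$$
As in the proof of Theorem~\ref{T34}, this forces $|x_i|=1$ and $\alpha_i x_i=|\alpha_i|$ for every $i\in\Lambda^c$. Equivalently, $x_i=\overline{\alpha_i}/|\alpha_i|$ whenever $\alpha_i\neq 0$. If $\Lambda^c$ is finite, there is nothing more to prove. Otherwise, $(x_{i_k})_k$ is a subsequence of the convergent sequence $x$, so $\bigl(\overline{\alpha_{i_k}}/|\alpha_{i_k}|\bigr)_k$ converges.

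\emph{Sufficiency.} If $\Lambda^c$ is finite, put $x_i=\overline{\alpha_i}/|\alpha_i|$ for $i\in\Lambda^c$ and $x_i=0$ otherwise. This $x$ is eventually zero, hence lies in $c$. It has norm $1$, since $\Lambda^c\neq\emptyset$ because $\|(\alpha_i)\|=1$. Moreover $\sum_i\alpha_ix_i=\sum_i|\alpha_i|=1$.

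If $\Lambda^c$ is infinite and $\overline{\alpha_{i_k}}/|\alpha_{i_k}|\to L$, then $|L|=1$. Define $x_i=\overline{\alpha_i}/|\alpha_i|$ for $i\in\Lambda^c$ and $x_i=L$ for $i\in\Lambda$. This step is the only point needing care: the entries on $\Lambda$ must be chosen so that the whole sequence converges, and filling them with the limit $L$ does this. Given $\epsilon>0$, all but finitely many indices of $\Lambda^c$ give entries within $\epsilon$ of $L$, and the entries on $\Lambda$ equal $L$ exactly. Hence $x_i\to L$, so $x\in S(c)$, and again $\sum_i\alpha_ix_i=1$. This completes the plan.
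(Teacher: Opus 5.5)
Your proof is correct. The sufficiency half is the paper's construction: the paper cites Lemma~\ref{L31} when $\Lambda^c$ is finite, and fills $\Lambda$ with the limit exactly as you do.

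The necessity half takes a different route. The paper identifies $c$ with $C(N_+)$ and applies Phelps's Theorem~\ref{T32}. This gives a continuous density $(\beta_i)\in S(c)$ with $\sum_i\alpha_i\gamma_i=\sum_i\beta_i\gamma_i|\alpha_i|$ for all $(\gamma_i)\in c$. Testing against $e_{i_k}\overline{\beta_{i_k}}$ yields $\beta_{i_k}=\alpha_{i_k}/|\alpha_{i_k}|$, and the paper then conjugates the convergent sequence $(\beta_{i_k})_k$.

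You instead work with the norming element itself. The equality case of $\mathrm{Re}\sum_i\alpha_ix_i\le\sum_i|\alpha_i||x_i|\le\sum_i|\alpha_i|$ forces $x_i=\overline{\alpha_i}/|\alpha_i|$ on $\Lambda^c$; this is the same device used in Theorem~\ref{T34} and Lemma~\ref{L31}. In effect you prove Phelps's theorem for the atomic measure $\sum_i|\alpha_i|\delta_i$.

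Your route is self-contained and needs no almost-everywhere bookkeeping or final conjugation. It also shows that every norming element is determined on $\Lambda^c$, so your sufficiency construction is forced up to the values on $\Lambda$. The paper's route instead presents the proposition as a direct instance of Theorem~\ref{T32}, which is why it introduces the identification $c=C(N_+)$.

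One convention you share with the paper: $(\alpha_i)$ acts by $x\mapsto\sum_{i\ge1}\alpha_ix_i$, i.e.\ with no mass at $\infty\in N_+$. This is not all of $c^*$; for instance it misses $x\mapsto\lim x$. For $\psi(x)=a\lim x+\sum_{i\ge1}\alpha_ix_i$ with $a\neq0$ and $\Lambda^c$ infinite, norm attainment requires $\alpha_{i_k}/|\alpha_{i_k}|\to a/|a|$, not mere convergence. So your phrase ``$c^*\cong\ell^1$'' should be read in this restricted sense.
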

\begin{proof}
    Let $(\alpha_{i})$ attain its norm and suppose $\Lambda^c$ is an infinite set. By Theorem~\ref{T32} there exists $(\beta_{i})\in S(c)$ such that $|\beta_{i}|=1$ and $\sum_{i=1}^{\infty}\alpha_{i}\gamma_{i}=\sum_{i=1}^{\infty}\beta_{i}\gamma_{i}|\alpha_{i}|$ for each $(\gamma_{i})\in c$. Fix $k \in \mathbb{N}$, if we choose $(\gamma_{i})=e_{i_{k}}\overline{\beta_{i_{k}}}$, then we have $\alpha_{i_{k}}\overline{\beta_{i_{k}}}=|\alpha_{i_{k}}|$ or $\beta_{i_{k}}=\frac{{\alpha_{i_{k}}}}{|\alpha_{i_{k}}|}$. Since $(\beta_{i})$ is convergent, we have $\left(\frac{{\alpha_{i_{k}}}}{|\alpha_{i_{k}}|}\right)_{k\in \mathbb{N}}$ is convergent. Hence $\left(\frac{\overline{\alpha_{i_{k}}}}{|\alpha_{i_{k}}|}\right)_{k\in \mathbb{N}}$ is convergent.

    Conversely, if $\Lambda^c$ is finite, then by Lemma~\ref{L31}, $(\alpha_{i})$ attains its norm on $S(c_{0})$ hence in $S(c)$. Now suppose that $\Lambda^c$ is an infinite set and $\left(\frac{{\overline{\alpha_{i_{k}}}}}{|\alpha_{i_{k}}|}\right)_{k\in \mathbb{N}}$ is convergent to $\alpha_{0}$. Clearly, $|\alpha_{0}|=1$ define $z_{n}=\alpha_{0}$ if $n\notin \{i_{k}:k\in \mathbb{N}\}$ and $z_{i_{k}}=\frac{{\overline{\alpha_{i_{k}}}}}{|\alpha_{i_{k}}|}$. It is easy to see that $(z_{i})\in S(c)$ and $(\alpha_{i})$ attains its norm at $(z_{i})$. This completes the proof.
\end{proof}

\subsection{Open Problems: } In what follows, we present some open problems.
\begin{Q}
 For a general compact Hausdorff space $\Omega$, the converse of Theorem~\ref{T23} is not known.
\end{Q}

\begin{Q}
 For a general compact Hausdorff space $\Omega$, the converse of Theorem~\ref{T41} is not known.
\end{Q}

\begin{Q}
  In Theorem~\ref{T34}, we have characterize norm-attaining functionals in the space $C(\Omega, X)$, when $X^*$ has the RNP. Corresponding case where $X^*$ fails to have the RNP remain open.
\end{Q}

\begin{Q}
    For a general compact Hausdorff space $\Omega$, the converse of Theorem~\ref{T0} is not known.
\end{Q}

\small{\bf Data Availability Statement: }
No data was used for the research described in the article.

\small{\bf Conflict of interest:} The author declares that there is no conflict of interest.
\section*{Acknowledgements}
The author would like to thank Prof.~T.~S.~S.~R.~K.~Rao and Dr.~Priyanka Grover for their invaluable guidance, constant support, and inspiring mentorship throughout this work.  
The author is also grateful for their encouragement during the preparation of the thesis, of which this paper forms a part.  
Sincere thanks are due to the department for providing a stimulating academic environment that greatly facilitated this research.
\bibliographystyle{plain, abbrv}

\end{document}